\numberwithin{equation}{section}
\newtheorem{theorem}{Theorem}[section]
\newtheorem{proposition}[theorem]{Proposition}
\newtheorem{corollary}[theorem]{Corollary}
\newtheorem{lemma}[theorem]{Lemma}
\newtheorem{definition}[theorem]{Definition}
\numberwithin{equation}{section}
\begin{document}

\title{Interpolating sequences for weighted Bergman spaces on strongly pseudoconvex bounded domains}
\author{Hamzeh Keshavarzi }

\maketitle

\begin{abstract}
Let $0<p<\infty$, $\beta>-1$, and $\Omega$ be a strongly pseudoconvex bounded domain with a smooth boundary in $\mathbb{C}^n$.
We will study the interpolation problem for weighted Bergman spaces $A^p_\beta(\Omega)$.  
In the case, $1\leq p<\infty$, and $\beta> \max \{n(2p-1)-1, n(2q-1)-1\}$,  where $q$ is the conjugate exponent of $p$ (let $q=1$, for $p=1$), we show that a sequence in $\mathbb{B}_n$, the unit ball in $\mathbb{C}^n$, is interpolating for $A^p_\beta(\mathbb{B}_n)$ if and only if it is separated.\\
\textbf{MSC(2010):} primary: $47B33$, secondary: $47B38$; $32A37$\\
\textbf{Keywords:} Strongly pseudoconvex bounded domains, interpolating sequences, weighted Bergman spaces.
\end{abstract}

\section{Introduction}

In this paper, we address the interpolating problem on the weighted Bergman spaces $A^p_\beta(\Omega)$. Let us first present some notations and a brief history:

For a positive integer $n$,
let $\Omega$ be a strongly pseudoconvex bounded domain with a smooth boundary in $\mathbb{C}^n$. Two examples of these domains are $\mathbb{D}$ (the open unit disk in the complex plane) and $\mathbb{B}_n$ (the open unit ball in $\mathbb{C}^n$). See \cite{krantz} for an account of strongly pseudoconvex bounded domains.  Throughout the paper, $a$ is a fixed point in $\Omega$.
 Space $H(\Omega)$ is the class of all analytic functions on $\Omega$ and $H^\infty (\Omega)$ is the space of all bounded functions in $H(\Omega)$.
Let $\nu$ be the Lebesque measure on $\Omega$ and $\delta(z)$ be the Euclidean distance of $z$ from the boundary of $\Omega$. If $0<p<\infty$  and $\beta >-1$, the weighted Bergman space is denoted by $A^p_\beta(\Omega)$ and defined as $A^p_\beta(\Omega)=L^p(\Omega, \delta^\beta d\nu)\cap H(\Omega)$. The classical weighted Bergman space is $A^2(\Omega)=A^2_0(\Omega)$.
For $p\geq 1$, the weighted Bergman space $A^p_\beta(\Omega)$ is a Banach space and for $0<p<1$, it is a complete metric space. For more details of  Bergman spaces see \cite{duren, hedenmalm, zhu1, zhu2}.

If $\varphi$ is a holomorphic self-map of $\Omega$, then the composition operator $C_\varphi$ on $H(\Omega)$ induced by $\varphi$ is defined as $C_\varphi f=f\circ \varphi$. It is well-known that the composition operators that are induced by automorphisms on $\mathbb{B}_n$ are bounded on $A_\beta^p(\mathbb{B}_n)$. The author \cite[Corollary 3.3]{keshavarzi} proved that an automorphism composition operator $C_\varphi$ is bounded on $A_\beta^p(\Omega)$ if and only if there exists some $C>0$ such that $(1/C)\delta(z)< \delta(\varphi(z))<C \delta(z)$ for all $z\in \Omega$. Hence, $C_\varphi$ is bounded on all $A_\beta^p(\Omega)$ if and only if it is bounded on some $A_\beta^p(\Omega)$. Two good sorce for studying the composition operators are \cite{cowen, shapiro}.

Nevannlina \cite{nevanlinna} gave a necessary and sufficient condition for a sequence to be an interpolation sequence in $H^\infty(\mathbb{D})$. Carleson \cite{carleson1} with a simpler condition presented another characterization for these sequences in $H^\infty(\mathbb{D})$.
Using interpolation sequences, Carleson solved the corona conjecture on $H^\infty (\mathbb{D})$ in his celebrated paper \cite{carleson2}.
Berndtsson \cite{bern} gave a sufficient condition for  $H^\infty(\mathbb{B}_n)$-interpolating sequences.
 Seip \cite{seip} addressed this problem on  $A_\beta^p(\mathbb{D})$ and gave a complete characterization for it. However, this problem has not been solved on $A_\beta^p(\mathbb{B}_n)$. Jevtic et al. \cite{jevtic} gave some results on $A_\beta^p(\mathbb{B}_n)$.

  In section $3$, we study  the interpolating problem on the weighted Bergman spaces $A^p_\beta(\Omega)$. In the case $1\leq p<\infty$, and
  $$\beta> \max \{n(2p-1)-1, n(2q-1)-1\},$$
  where $q$ is the conjugate exponent of $p$ (let $q=1$, when $p=1$),  we show that separated sequences are interpolating for $A^p_\beta(\Omega)$.
Also, for these $\beta$ values, we prove that a sequence in $\mathbb{B}_n$ is $A^p_\beta(\mathbb{B}_n)$-interpolating if and only if it is separation.

The notation $A\lesssim B$ on a set
$S$ means that, independent of any $a\in S$, there exists some positive constant $C$ such that $A(a)\leq  CB(a)$. In this case, we say that $\{A\}$ is, up to a constant, less than or equal to $\{B\}$. Also,
 we use the notation
$A\simeq B$ on $S$ to indicate that there are some
positive constants $C$ and $D$ such that $CB(a) \leq A(a)\leq DB(a)$ for
each $a\in S$.

\section{Preliminaries}

\textbf{The Kobayashi metric.}
The infinitesimal Kobayashi metric $F_K:\Omega\times \mathbb{C}^n \rightarrow [0,\infty)$ is defined as
$$F_K(z,w)=\inf  \Big{\{} C>0: \ \exists f\in H(\mathbb{D}, \Omega) \ with \ f(0)=z, \ f^\prime(0)=\dfrac{w}{C} \Big{\}}.$$
Where $H(\mathbb{D}, \Omega)$ is the space of analytic functions from $\mathbb{D}$ to $\Omega$.
Let $\gamma:[0,1]\rightarrow \Omega$ be a $C^1$-curve. The Kobayashi length of $\gamma$ is defined as:
$$L_K(\gamma)= \int_0^1 F_K (\gamma(t), \gamma^\prime(t))dt.$$
For $z,w\in \Omega$, the Kobayashi metric function is defined as
$$\beta(z,w)=\inf \Big{ \{} L_K(\gamma); \ \gamma \ is \ C ^1-curve \ with \ \gamma(0)=z \  and \ \gamma(1)=w \Big{\}}.$$
The Kobayashi metric of a strongly pseudoconvex bounded domain is a complete metric  (see  \cite[Corollary 2.3.53]{abate}).

\textbf{Kobayashi balls.} If $0<r<1$ and $z\in \Omega$, we shall denote by $B(z,r)$ the Kobayashi ball centered at $z$ with the radius of $r$ and define it in the following way:
$$B(z,r)=\{w\in \Omega: \ \tanh \beta(z,w)<r\}.$$
Since $\beta(.,.)$ is a complete metric, the closures of Kobayashi balls are compact.
By \cite[Lemmas 2.1 and 2.2 ]{abate1}, for every $0<r<1$, we have
\begin{equation} \label{e1}
 \delta(z)^{n+1} \simeq \delta(w)^{n+1}\simeq \nu(B(w,r))\simeq \nu(B(z,r)).
\end{equation}
for all $w\in \Omega$ and $z\in B(w,r)$.

Given $z\in \Omega$, $0<r<1$, and a sequence $\{a_k\}$ in $\Omega$, we shall denote by $N(z,r,\{a_k\})$ the number of points of $\{a_k\}$ contained in $B(z,r)$.
\begin{lemma}\cite[Lemma 2.5]{abate1} \label{l1}
Let $r\in (0,1)$. Then there exists a sequence $\{a_k\}$ of points in $\Omega$ such that $\Omega=\bigcup_{k=0}^\infty B(a_k,r)$ and $\sup_{z\in \Omega} N(z,R,\{a_k\})<\infty$, where $R=\frac{1+r}{2}$. The sequence $\{a_k\}$ is called an $r$-lattice for $\Omega$.
\end{lemma}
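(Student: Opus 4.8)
The plan is to produce $\{a_k\}$ as a maximal separated set and then extract both the covering property and the finite‑overlap property from the volume comparison (\ref{e1}). Fix once and for all two auxiliary radii: a separation radius $r_0\in(0,r)$, and a disjointness radius $s\in(0,1)$ so small that $\tanh\!\big(2\operatorname{arctanh}s\big)\le r_0$ (possible since $\operatorname{arctanh}$ is continuous and vanishes at $0$). By Zorn's lemma let $\{a_k\}$ be a subset of $\Omega$ that is maximal with respect to the property $\tanh\beta(a_j,a_k)\ge r_0$ for all $j\ne k$; in particular the $a_k$ are pairwise distinct.

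\emph{Covering.} If $w\in\Omega$ is not one of the $a_k$, then $\{a_k\}\cup\{w\}$ violates maximality, so there is some $k$ with $\tanh\beta(w,a_k)<r_0<r$, i.e. $w\in B(a_k,r)$; and each $a_k$ lies in $B(a_k,r)$ trivially. Hence $\Omega=\bigcup_k B(a_k,r)$. \emph{Countability.} The balls $B(a_k,s)$ are pairwise disjoint: if $w\in B(a_j,s)\cap B(a_k,s)$ with $j\ne k$, the triangle inequality for the Kobayashi distance gives $\beta(a_j,a_k)\le\beta(a_j,w)+\beta(w,a_k)<2\operatorname{arctanh}s$, so $\tanh\beta(a_j,a_k)<\tanh(2\operatorname{arctanh}s)\le r_0$, a contradiction. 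Since a family of pairwise disjoint nonempty open subsets of the second‑countable space $\Omega$ is at most countable, we may index $\{a_k\}$ by $k\in\mathbb N$.

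\emph{Finite overlap.} Fix $z\in\Omega$ and let $a_{k_1},\dots,a_{k_m}$ be the points of the sequence lying in $B(z,R)$. Set $R'=\tanh\!\big(\operatorname{arctanh}R+\operatorname{arctanh}s\big)\in(0,1)$. For each $j$ and each $w\in B(a_{k_j},s)$ the triangle inequality yields $\beta(z,w)\le\beta(z,a_{k_j})+\beta(a_{k_j},w)<\operatorname{arctanh}R+\operatorname{arctanh}s$, so $B(a_{k_j},s)\subset B(z,R')$. By disjointness of the $B(a_{k_j},s)$ and monotonicity of $\nu$,
$$\sum_{j=1}^m\nu\big(B(a_{k_j},s)\big)=\nu\Big(\bigcup_{j=1}^m B(a_{k_j},s)\Big)\le\nu\big(B(z,R')\big).$$
Now apply (\ref{e1}) three times, each at a fixed radius in $(0,1)$: at radius $s$ to get $\nu(B(a_{k_j},s))\simeq\delta(a_{k_j})^{n+1}$; at radius $R$, using $a_{k_j}\in B(z,R)$, to get $\delta(a_{k_j})^{n+1}\simeq\delta(z)^{n+1}$; and at radius $R'$ (with $z\in B(z,R')$) to get $\delta(z)^{n+1}\simeq\nu(B(z,R'))$. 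Thus every summand on the left is $\simeq\nu(B(z,R'))$ with implied constants depending only on $r$ (through $r_0,s,R,R'$) and on $\Omega$, so dividing gives $m\lesssim1$ uniformly in $z$, which is precisely $\sup_{z\in\Omega}N(z,R,\{a_k\})<\infty$.

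The two maximality/triangle‑inequality arguments are routine; the only step needing care is the overlap estimate, where one must keep every invocation of (\ref{e1}) at a fixed radius in $(0,1)$ so that all implied constants stay independent of $z$. Beyond this bookkeeping there is no real obstacle, since (\ref{e1}) already encodes the geometric input — comparability of $\delta$ and of Kobayashi‑ball volumes — that makes this Vitali‑type counting work.
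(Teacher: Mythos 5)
Your proof is correct. The paper does not actually prove this lemma --- it is quoted directly from Abate--Saracco \cite[Lemma 2.5]{abate1} --- and your construction (a maximal $r_0$-separated set, covering by maximality, and a Vitali-type overlap count using the volume comparisons in (\ref{e1}) at fixed radii) is essentially the argument of that cited source, so there is nothing to reconcile.
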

Also, in the proof of \cite[Lemma 2.5]{abate1}, it has been shown that the balls $B(a_k,r/3)$ are disjoint.

\begin{lemma}\cite[Corollary 2.8]{abate1} \label{l2}
Given $r\in (0,1)$, set $R=\frac{1+r}{2}$. Then there exists a $C_r>0$ such that
$$\chi(z) \leq \dfrac{C_r}{\nu(B(w,r))} \int_{B(w,R)} \chi d\nu, \qquad \forall w\in \Omega, \ \forall z\in B(w,r),$$
for every non-negative plurisubharmonic function $\chi:\Omega\rightarrow \mathbb{R}^+$.
\end{lemma}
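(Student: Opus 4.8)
The plan is to derive the inequality from the sub-mean value property of plurisubharmonic functions, applied over carefully chosen analytic polydiscs, combined with the volume comparison \eqref{e1}. First, a reduction that uses only the triangle inequality for the Kobayashi distance: since $R=\frac{1+r}{2}>r$, the number $s=s(r):=\tanh\big(\operatorname{artanh}(R)-\operatorname{artanh}(r)\big)$ is strictly positive, and for every $w\in\Omega$, every $z\in B(w,r)$ and every $\zeta\in B(z,s)$ one has $\beta(w,\zeta)\le\beta(w,z)+\beta(z,\zeta)<\operatorname{artanh}(R)$; hence $B(z,s)\subseteq B(w,R)$ whenever $z\in B(w,r)$. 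So it suffices to produce, for each $z\in\Omega$, a region $P_z\subseteq B(z,s)$ with $z\in P_z$ over which every non-negative plurisubharmonic function satisfies the sub-mean value inequality and whose volume is $\simeq\delta(z)^{n+1}$.

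The region $P_z$ will be an analytic polydisc adapted to the boundary geometry near the boundary point $\pi(z)$ closest to $z$: after a unitary rotation $U_z$ sending the last coordinate axis to the complex normal direction at $\pi(z)$, set $P_z=z+U_z\big(D(0,\varepsilon\delta(z)^{1/2})^{n-1}\times D(0,\varepsilon\delta(z))\big)$, where $\varepsilon=\varepsilon(r)>0$ is a small constant. Using the local paraboloid model of a strongly pseudoconvex boundary point one verifies that $P_z\subseteq\Omega$ and that $\delta(\zeta)\simeq\delta(z)$ for all $\zeta\in P_z$; and using the standard upper estimate for the infinitesimal Kobayashi metric of strongly pseudoconvex domains, namely $F_K(\zeta,v)\lesssim |v_N|/\delta(\zeta)+|v_T|/\delta(\zeta)^{1/2}$ (complex normal / tangential decomposition), one bounds the Kobayashi length of straight segments inside $P_z$ and concludes that $P_z$ has Kobayashi diameter less than $\operatorname{artanh}(s)$, so $P_z\subseteq B(z,s)$ once $\varepsilon$ is small enough. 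I expect this construction — that is, checking $P_z\subseteq\Omega$ and $P_z\subseteq B(z,s)$ uniformly in $z$ — to be the only substantial point; it is precisely the anisotropic comparison between Kobayashi balls and Koranyi-type polydiscs that underlies \eqref{e1}, and it may alternatively be quoted from the geometry of strongly pseudoconvex domains. Note that $\nu(P_z)=\pi^n\varepsilon^{2n}\,\delta(z)^{n+1}$, since $U_z$ and translations preserve Lebesgue measure.

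Finally, I assemble the pieces. For any non-negative plurisubharmonic $\chi$ on $\Omega$ and any $z$, applying the one-variable sub-mean value inequality successively to the $n$ coordinate discs of $P_z$ (each slice of $\chi$ is subharmonic, and the affine map $z+U_z(\cdot)$ preserves both plurisubharmonicity and Lebesgue measure) yields $\chi(z)\le\nu(P_z)^{-1}\int_{P_z}\chi\,d\nu$. If moreover $z\in B(w,r)$, then $P_z\subseteq B(z,s)\subseteq B(w,R)$, so $\int_{P_z}\chi\,d\nu\le\int_{B(w,R)}\chi\,d\nu$ because $\chi\ge0$; and by \eqref{e1}, applied with $z\in B(w,r)$, $\nu(P_z)\simeq\delta(z)^{n+1}\simeq\delta(w)^{n+1}\simeq\nu(B(w,r))$ with constants depending only on $r$. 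Therefore
$$\chi(z)\le\frac{1}{\nu(P_z)}\int_{B(w,R)}\chi\,d\nu\le\frac{C_r}{\nu(B(w,r))}\int_{B(w,R)}\chi\,d\nu,$$
which is the asserted inequality.
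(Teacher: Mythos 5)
The paper does not actually prove Lemma \ref{l2}: it is imported verbatim from \cite[Corollary 2.8]{abate1}, so there is no internal proof to compare yours against. Your argument is correct in outline and is essentially the argument of the cited source: reduce, via the triangle inequality for the Kobayashi distance, to a sub-mean value inequality over a set of volume $\simeq\delta(z)^{n+1}$ containing $z$ and contained in $B(z,s)$, then invoke \eqref{e1} to trade $\nu(P_z)$ for $\nu(B(w,r))$. Your first paragraph (the containment $B(z,s)\subseteq B(w,R)$) and your final assembly are complete, as is the iterated one-variable sub-mean value inequality over the polydisc (each coordinate slice of a plurisubharmonic function is subharmonic, and nonnegativity justifies the Fubini step). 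The only substantive content, as you yourself flag, is the uniform geometric claim that the anisotropic polydisc $P_z$ (tangential radii $\varepsilon\delta(z)^{1/2}$, normal radius $\varepsilon\delta(z)$) lies in $\Omega$ with $\delta\simeq\delta(z)$ on it and lies in $B(z,s)$. Two points there deserve more care than you give them: first, the nearest boundary point $\pi(z)$ and the paraboloid model are available only for $z$ in a collar neighborhood of $\partial\Omega$, so the case of $z$ ranging over a compact subset of $\Omega$ must be treated separately (trivially, with a fixed Euclidean ball); second, in the upper bound $F_K(\zeta,v)\lesssim|v_N|/\delta(\zeta)+|v_T|/\delta(\zeta)^{1/2}$ the normal/tangential splitting is taken at $\zeta$, not at $z$, so you should observe that along a segment in $P_z$ the two splittings differ by a rotation of size $O(\varepsilon\delta(z)^{1/2})$, which contributes only $O(\varepsilon^{2})$ to the Kobayashi length and is therefore harmless. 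Since this polydisc-versus-Kobayashi-ball comparison is precisely the machinery behind \eqref{e1} and behind the lemmas of \cite{abate1} that the paper already relies on, quoting it rather than reproving it is legitimate, and with that citation your proof is sound.
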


\textbf{Bergman kernel.}
Let $K(z,w)$ be the Bergman kernel for $A^2(\Omega)$. Indeed, if $f\in A^2(\Omega)$, then
$$f(z)= \int_\Omega K(z,w) f(w) d\nu(w).$$
Since $K(.,z)=\overline{K(z,.)}\in A^2(\Omega)$, we have
$$K(z,z)= \int_\Omega |K(w,z)|^2 d\nu(w)=\|K(z,.)\|_{0,2}^2.$$
Consider $r\in (0,1)$. There is a positive constant $\delta_r$ which for any $z\in \Omega$ with $\delta(z)<\delta_r$,
\begin{equation} \label{e2}
 |K(z,w)|\simeq \dfrac{1}{\delta(z)^{n+1}}\gtrsim 1, \qquad w\in B(z,r).
\end{equation}
Also, for all $z\in \Omega$,
\begin{equation} \label{e3}
|K(z,z)|\simeq \dfrac{1}{\delta(z)^{n+1}}\gtrsim 1.
\end{equation}
For the proof of the above inequalities see \cite[page 186]{range}, \cite[page 1239]{hu}, and \cite[Lemma 3.1, Lemma 3.2 and Corollary 3.3]{abate1}.
By \cite[Corollary 11 and Theorem 13]{li},
\begin{equation} \label{e4}
 \int_\Omega |K(z,w)|^p \delta(w)^\alpha d\nu(w)\lesssim \delta(z)^{\alpha+n+1-(n+1)p},
\end{equation}
for all $z\in \Omega$, $\frac{n}{n+1}<p<\infty$ and $-1<\alpha<(n+1)(p-1)$.

\textbf{Carleson measures}.
 A positive Borel measure
$\mu$ on $\Omega$ will be called a $\beta$-Bergman Carleson measure if there exists a
positive constant $K$ such that
$$\int_{\Omega}|f(z)|^p d\mu \leq K\int_{\Omega}|f (z)|^p \delta(z)^\beta d\nu(z),$$
for all $f$ in $A^p_\beta(\Omega)$. According to the \cite{abate1} and \cite{hu}, $\mu$ is a $\beta$-Bergman Carleson measure if and only if for some (every) $0<r<1$, there exists some $C_r>0$, where $\mu(B(z,r))\leq C_r \delta(z)^{\beta+n+1}$, for all $z\in \Omega$.

\section{\textbf{Interpolating sequences for $A^p_\beta(\Omega)$}}
In this section, we study the interpolating problem on $A^p_\beta(\Omega)$ and give several results about it.
For $p>0$ and $\beta\in \mathbb{R}$, let
$$\ell_\beta^p=\ell_\beta^p(\{a_k\}):= \{v=\{v_k\}\subset \mathbb{C}: \ \{\delta(a_k)^\beta v_k\}\in \ell^p\},$$
Also, for $v\in \ell_\beta^p$,
$$\|v\|^p_{p,\beta}:= \sum_{k=1}^\infty \Big( \delta(a_k)^\beta |v_k|\Big)^p.$$

\begin{definition}
We say that $\{a_k\}$ is an interpolating sequence for $A_\beta^p(\Omega)$ or an $A_\beta^p(\Omega)$-interpolating sequence, denoted by $\{a_k\}\in Int(A_\beta^p(\Omega))$ if for any $\{v_k\}\in \ell^p_{(n+1+\beta)/p}$, there exists $f\in A_\beta^p(\Omega)$ such that $f(a_k)=v_k$, for all $k$.
\end{definition}

\begin{definition}
We say that a sequence $\{a_k\}$ is separated if there exists $\delta>0$ such that, for all $j\neq k$, $\beta(a_k,a_j)>\delta$.
\end{definition}

\begin{proposition}\label{p1}
For $\beta>-1$ and $p>0$, if $\{a_k\}\in Int(A_\beta^p(\Omega))$ is separated, then  there is a constant $M>0$ (which is called the interpolation constant of $\{a_k\}$) such that $\|f\|_{p,\beta} \leq M \|v\|_{p,(n+1+\beta)/p}$.
\end{proposition}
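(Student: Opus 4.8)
\section*{Proof proposal}

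The plan is to realize the interpolation data as the surjectivity of a single bounded linear operator and then invoke the open mapping theorem. Define the restriction operator $T\colon A^p_\beta(\Omega)\to \ell^p_{(n+1+\beta)/p}$ by $Tf=\{f(a_k)\}_k$. By definition, the hypothesis $\{a_k\}\in Int(A_\beta^p(\Omega))$ says precisely that $T$ is onto, once we know $T$ is well defined and bounded. Since $A^p_\beta(\Omega)$ is a Banach space for $p\ge 1$ and a complete metric linear space for $0<p<1$, and $\ell^p_{(n+1+\beta)/p}$ is complete for every $p>0$, the open mapping theorem (valid for complete metrizable linear spaces, hence also in the range $0<p<1$) applies to $T$ as soon as boundedness is established: it produces a constant $c>0$ with $\{v:\ \|v\|_{p,(n+1+\beta)/p}<c\}\subseteq T\bigl(\{f:\ \|f\|_{p,\beta}<1\}\bigr)$. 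As both $\|\cdot\|_{p,\beta}$ and $\|\cdot\|_{p,(n+1+\beta)/p}$ are homogeneous (even when $p<1$), a rescaling then gives, for each $v$, some $f$ with $Tf=v$ and $\|f\|_{p,\beta}\le M\|v\|_{p,(n+1+\beta)/p}$ with $M=2/c$, which is the assertion.

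Thus everything reduces to the boundedness of $T$, equivalently to the statement that the sequence $\{a_k\}$ weighted by the masses $\delta(a_k)^{n+1+\beta}$ is a $\beta$-Bergman Carleson measure; this is the only place where separatedness enters. First I would record that separatedness forces a uniform bound on the counting function $N(z,\rho,\{a_k\})$ for every fixed $\rho\in(0,1)$: if $\beta(a_i,a_j)>\delta$ whenever $i\neq j$, then the Kobayashi balls $B(a_k,\tanh(\delta/2))$ are pairwise disjoint, and whenever $a_i,a_j$ lie in $B(z,\rho)$ all these balls sit inside one larger ball $B(z,\rho')$ with $\rho'$ depending only on $\delta$ and $\rho$ (triangle inequality for $\beta(\cdot,\cdot)$); by \eqref{e1} each such small ball has volume $\simeq\delta(z)^{n+1}\simeq\nu(B(z,\rho'))$, so a volume count caps their number, i.e.\ $N(z,\rho,\{a_k\})\lesssim 1$ uniformly in $z$.

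With this in hand I would obtain the key estimate directly from Lemma~\ref{l2}. Applying it to the plurisubharmonic function $|f|^p$ with $z=w=a_k$ and $R=\frac{1+r}{2}$ gives $|f(a_k)|^p\lesssim \nu(B(a_k,r))^{-1}\int_{B(a_k,R)}|f|^p\,d\nu$; using \eqref{e1} (so $\nu(B(a_k,r))\simeq\delta(a_k)^{n+1}$ and $\delta(z)\simeq\delta(a_k)$ for $z\in B(a_k,R)$) this becomes
\[
\delta(a_k)^{n+1+\beta}\,|f(a_k)|^p \ \lesssim\ \int_{B(a_k,R)}|f(z)|^p\,\delta(z)^\beta\,d\nu(z).
\]
Summing over $k$ and using that each point of $\Omega$ lies in at most $\sup_{w}N(w,R,\{a_k\})\lesssim 1$ of the balls $B(a_k,R)$ (because $z\in B(a_k,R)\iff a_k\in B(z,R)$), one gets $\|Tf\|_{p,(n+1+\beta)/p}^p=\sum_k\delta(a_k)^{n+1+\beta}|f(a_k)|^p\lesssim\int_\Omega|f|^p\delta^\beta\,d\nu=\|f\|_{p,\beta}^p$, which simultaneously shows $T$ is well defined and bounded. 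The only genuine work is this Carleson/counting estimate; once $T$ is bounded and, by hypothesis, onto, the open mapping theorem plus the rescaling finish the argument, so the main obstacle is precisely the boundedness step and, inside it, the uniform control of the counting function via separatedness.
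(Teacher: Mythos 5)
Your proposal is correct and follows essentially the same route as the paper: reduce the statement to boundedness of the restriction map $f\mapsto\{f(a_k)\}$, prove that boundedness from separation together with the sub-mean-value property of $|f|^p$ (Lemma~\ref{l2} and \eqref{e1}), and conclude via the open mapping theorem. The only cosmetic difference is that the paper exploits the pairwise disjointness of the small balls $B(a_k,\delta)$ directly to bound $\sum_k\int_{B(a_k,\delta)}$ by $\int_\Omega$, whereas you pass through a uniform bound on the counting function $N(z,R,\{a_k\})$ and the finite overlap of the larger balls $B(a_k,R)$ --- both are valid and rest on the same volume estimates.
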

\begin{proof}
If we prove that $f\mapsto \{f(a_k)\}$ is a bounded operator then open mapping theorem implies the desire result. By using the assumption mentioned above, for some $\delta>0$, the balls $B(a_k,\delta)$ are pairwise disjoint. Thus,
\begin{eqnarray*}
\|f\|^p_{p,\beta} &\geq& \sum \int_{B(a_k,\delta)} |f(z)|^p \delta(z)^\beta dv(z).
\end{eqnarray*}
Using the plurisubhormonicity of $|f|^p$, we have
$$\|f\|^p_{p,\beta} \gtrsim \sum \delta(a_k)^{n+1+\beta} |f(a_k)|^p =\|\{f(a_k)\}\|^p_{(n+1+\beta)/p}.$$
the proof is complete.
\end{proof}

It is well-known that the  Bergman metric on $\mathbb{B}_n$ and the Kobayashi metric on $\Omega$ are automorphism invariant.
Hence, it is not far from mind that interpolating sequences are automorphism invariant (that is, if $\{a_k\}$ is an $A_\beta^p(\Omega)$-interpolating sequence, then so is $\{\varphi(a_k)\}$, where $\varphi$ is an automorphism). 

\begin{theorem}
Let $\varphi$ be an automorphism of $\Omega$ and the composition operator induced by it be bounded on $A_\beta^p(\Omega)$. If $\{a_k\}$ is an $A_\beta^p(\Omega)$-interpolating sequence, then so is $\{\varphi(a_k)\}$.
\end{theorem}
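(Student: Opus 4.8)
The plan is to transport the interpolation problem along $\varphi$ using the fact that $C_\varphi$ is bounded and invertible on $A_\beta^p(\Omega)$ (its inverse being $C_{\varphi^{-1}}$, which is also an automorphism composition operator, hence bounded by \cite[Corollary 3.3]{keshavarzi} applied to $\varphi^{-1}$ — note that $(1/C)\delta(z)<\delta(\varphi(z))<C\delta(z)$ for all $z$ is equivalent to the analogous two-sided estimate for $\varphi^{-1}$). So $C_\varphi$ is a Banach space isomorphism of $A_\beta^p(\Omega)$ onto itself (for $0<p<1$ one argues instead with the complete metric structure, but the map is still a homeomorphism). First I would record the key metric consequence: since the Kobayashi distance is automorphism-invariant, $\beta(\varphi(a_k),\varphi(a_j))=\beta(a_k,a_j)$, so if $\{a_k\}$ is separated then so is $\{\varphi(a_k)\}$; moreover, combining \eqref{e1} with the estimate $\delta(\varphi(z))\simeq\delta(z)$ gives $\delta(\varphi(a_k))^{n+1}\simeq\delta(a_k)^{n+1}$ uniformly in $k$, which means the weighted sequence spaces $\ell^p_{(n+1+\beta)/p}(\{a_k\})$ and $\ell^p_{(n+1+\beta)/p}(\{\varphi(a_k)\})$ coincide with equivalent norms.

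Next, given a target sequence $\{w_k\}\in\ell^p_{(n+1+\beta)/p}(\{\varphi(a_k)\})$, I would set $v_k:=w_k$ and observe that by the norm equivalence just noted, $\{v_k\}\in\ell^p_{(n+1+\beta)/p}(\{a_k\})$. Since $\{a_k\}\in Int(A_\beta^p(\Omega))$, there is $g\in A_\beta^p(\Omega)$ with $g(a_k)=v_k$ for all $k$. Now put $f:=g\circ\varphi^{-1}=C_{\varphi^{-1}}g$. Boundedness of $C_{\varphi^{-1}}$ on $A_\beta^p(\Omega)$ gives $f\in A_\beta^p(\Omega)$, and $f(\varphi(a_k))=g(\varphi^{-1}(\varphi(a_k)))=g(a_k)=v_k=w_k$. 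Hence $\{\varphi(a_k)\}$ interpolates every admissible target, i.e. $\{\varphi(a_k)\}\in Int(A_\beta^p(\Omega))$. This is really the whole argument; the separation hypothesis in the statement is only needed so that Proposition \ref{p1} applies and one gets, in addition, a uniform interpolation constant for $\{\varphi(a_k)\}$ — which follows since the isomorphism constant of $C_\varphi$ and the norm-equivalence constants for the sequence spaces are independent of $\{w_k\}$.

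The only genuine point requiring care — the "main obstacle," such as it is — is the boundedness of $C_{\varphi^{-1}}$: the theorem only hypothesizes that $C_\varphi$ is bounded, so I must invoke \cite[Corollary 3.3]{keshavarzi} to pass from the two-sided distance-to-boundary estimate for $\varphi$ to the one for $\varphi^{-1}$ (substitute $z\mapsto\varphi^{-1}(z)$), and then conclude $C_{\varphi^{-1}}$ is bounded as well. A secondary subtlety is that "$Int$" as defined does not come with a quantitative bound; the quantitative statement requires separation and is exactly what Proposition \ref{p1} supplies once qualitative interpolation is established, so I would phrase the conclusion to include the interpolation constant explicitly. Everything else — invariance of $\beta$, the estimate $\delta\circ\varphi\simeq\delta$, the resulting equivalence of weighted $\ell^p$ norms — is routine.
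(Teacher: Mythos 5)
Your proof is correct, and it takes a genuinely simpler route than the paper's. The paper does not compose with $\varphi^{-1}$ directly: it introduces the weighted composition operators $T_\varphi f=\big(K(\varphi^{-1}(a),\cdot)\big)^{2(n+1+\beta)/((n+1)p)}\, f\circ\varphi$ and $S_\varphi$, proves $\|T_\varphi f\|_{p,\beta}\simeq\|f\|_{p,\beta}$ by combining the transformation rule \eqref{e5} for the Bergman kernel with the change-of-variables identity \eqref{e6} and \cite[Corollary 3.3]{keshavarzi}, and then interpolates the modified data $K(\varphi^{-1}(a),a_k)^{2(n+1+\beta)/((n+1)p)}v_k$ on $\{a_k\}$ before applying $S_\varphi$. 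You instead use the plain operator $C_{\varphi^{-1}}$ and the unmodified data; this works because the criterion of \cite[Corollary 3.3]{keshavarzi} quoted in the introduction, $(1/C)\delta(z)<\delta(\varphi(z))<C\delta(z)$, is manifestly symmetric under $\varphi\mapsto\varphi^{-1}$ (substitute $z\mapsto\varphi^{-1}(z)$), and because $\delta(\varphi(a_k))\simeq\delta(a_k)$ makes $\ell^p_{(n+1+\beta)/p}(\{a_k\})$ and $\ell^p_{(n+1+\beta)/p}(\{\varphi(a_k)\})$ coincide with equivalent norms. Your route is shorter and avoids the one delicate point in the paper's computation, namely the assertion that $K(a,\cdot)$ is bounded above \emph{and below} on $\Omega$ (the lower bound is not automatic, since the kernel may vanish off the diagonal); what the paper's kernel-weighted $T_\varphi$ buys in exchange is a near-isometry rather than a mere isomorphism, which is not needed for the statement. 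Two small remarks: the theorem as stated carries no separation hypothesis, so your aside about Proposition \ref{p1} and the interpolation constant is extraneous to the claim; and for $0<p<1$ your argument goes through verbatim, since only boundedness of $C_{\varphi^{-1}}$, not any Banach-space structure, is used.
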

\begin{proof}
We define
$$T_\varphi f(z)=  \Big( K(\varphi^{-1}(a),z)\Big)^\frac{2(n+1+\beta)}{(n+1)p} f\circ \varphi(z),$$
and
$$S_\varphi f(z)= \Big( K(\varphi^{-1}(a),\varphi^{-1}(z))\Big)^\frac{-2(n+1+\beta)}{(n+1)p} f\circ \varphi^{-1}(z).$$
We show that $T_\varphi$ is invertible with $S_\varphi$ as its inverse. For any $f$ in $A_\beta^p(\Omega)$, we have
$$\|T_\varphi f\|^p_{p,\beta}= \int_\Omega |K(\varphi^{-1}(a),z)|^\frac{2(n+1+\beta)}{n+1} |f\circ \varphi(z)|^p \delta(z)^\beta d\nu(z)$$
By \ref{e5}, we have
$$ \|T_\varphi f\|^p_{p,\beta}= \int_\Omega |J_\varphi(\varphi^{-1}(a)) J_\varphi(z)|^2 |K(a,\varphi(z))|^2 |f\circ \varphi(z)|^p \delta(z)^\beta d\nu(z)$$
Since the function $K(a,z)$ has upper and lower bounds,
$$ \|T_\varphi f\|^p_{p,\beta}\simeq \int_\Omega |J_\varphi(z)|^2  |f\circ \varphi(z)|^p \delta(z)^\beta d\nu(z)$$
Now by \ref{e6},
$$ \|T_\varphi f\|^p_{p,\beta}\simeq\int_\Omega  |f(z)|^p \delta(\varphi^{-1}(z))^\beta d\nu(z).$$
Finally, \cite[Corollary 3.3]{keshavarzi} implies that
$$ \|T_\varphi f\|^p_{p,\beta} \simeq \int_\Omega  |f(z)|^p \delta(z)^\beta d\nu(z)=\|f\|^p_{p,\beta}.$$
Also, we can see that $T_\varphi S_\varphi =S_\varphi T_\varphi=I$. Therefore, by the open mapping theorem, $T_\varphi$ is invertible. Take $v\in \ell^p_{(n+1+p)/p}(\{\varphi(a_k)\})$. Then
$$\sum_{k=1}^\infty | K(\varphi^{-1}(a),a_k)|^\frac{2(n+1+\beta)}{n+1} \delta(a_k)^{n+1+\beta} |v_k|^p$$
is, up to a constant, less than or equal to
$$\sum_{k=1}^\infty  \delta(a_k)^{n+1+\beta} |v_k|^p< \infty.$$
Thus, there is an $F$ such that $\|F\|_{\beta,p}\lesssim \|v\|$ and
$$F(a_k)= K(\varphi^{-1}(a),a_k)^\frac{2(n+1+\beta)}{(n+1)p} v_k.$$
Then if $G=S_\varphi F$, we can see that $G(\varphi(a_k))=v_k$. The proof is complete.
\end{proof}

According to the following theorem, if we add a finite number of points to a $A_\beta^p(\Omega)$-interpolating separated sequence, then it remains $A_\beta^p(\Omega)$-interpolating.

\begin{theorem} \label{tc}
Let $1\leq p<\infty$ and $\beta>-1$.
The union of a $A_\beta^p(\Omega)$-interpolating separated sequence and a finite number of points is again $A_\beta^p(\Omega)$-interpolating.
\end{theorem}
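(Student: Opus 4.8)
The plan is to reduce the statement to adjoining a single point. By induction on the number of added points it suffices to treat the following situation: $\{a_k\}$ is an $A_\beta^p(\Omega)$-interpolating separated sequence and $b\in\Omega$, and we want $\{a_k\}\cup\{b\}$ to be interpolating; moreover we may assume $b\neq a_k$ for all $k$, since otherwise the union is the original set. Given admissible data, $\{v_k\}\in\ell^p_{(n+1+\beta)/p}(\{a_k\})$ and $w\in\mathbb{C}$, the interpolating property of $\{a_k\}$ supplies $F\in A_\beta^p(\Omega)$ with $F(a_k)=v_k$ for all $k$. The whole theorem then collapses to one assertion: there is a function $\phi\in A_\beta^p(\Omega)$ with $\phi(a_k)=0$ for all $k$ and $\phi(b)\neq 0$. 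Indeed, granting this, $f:=F+\frac{w-F(b)}{\phi(b)}\,\phi$ lies in $A_\beta^p(\Omega)$ and satisfies $f(a_k)=v_k$ for all $k$ and $f(b)=w$, which is all the definition requires. (One checks easily that this reduction is not just sufficient but in fact equivalent to the theorem.)

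To build $\phi$ I would first dispose of the case where $b$ sits very close to the sequence. If $\{a_k\}$ is $\delta$-separated, i.e.\ $\beta(a_k,a_j)>\delta$ for $j\neq k$, then at most one point $a_{k_0}$ can lie within Kobayashi distance $\delta/3$ of $b$. If such an $a_{k_0}$ exists, delete it: the subsequence $\{a_k\}_{k\neq k_0}$ is still separated and still interpolating (a subsequence of an interpolating sequence is interpolating, by extending the data by $0$), and now $b$ is at Kobayashi distance at least $2\delta/3$ from every remaining point. If one can produce $\psi\in A_\beta^p(\Omega)$ vanishing on $\{a_k\}_{k\neq k_0}$ with $\psi(b)\neq 0$, then, picking a coordinate $j$ with $b_j\neq(a_{k_0})_j$ and setting $\ell(z)=z_j-(a_{k_0})_j$, the product $\phi:=\ell\,\psi$ is in $A_\beta^p(\Omega)$ (since $\ell$ is bounded on $\Omega$), vanishes at every $a_k$, and is nonzero at $b$. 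So we are reduced to the case in which $b$ is uniformly separated from $\{a_k\}$.

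In that case I would construct $\phi$ in the form $\phi:=g-h$, where $g\in A_\beta^p(\Omega)$ is a test function sharply peaked at $b$ — built from a high power $\bigl(\delta(b)^{n+1}K(b,\cdot)\bigr)^{N}$ of the normalised Bergman kernel at $b$, suitably damped near $\partial\Omega$ — normalised so that $g(b)$ has size comparable to $1$ (using \eqref{e3}) while $\sum_k\delta(a_k)^{n+1+\beta}|g(a_k)|^p$ is as small as we please; and $h\in A_\beta^p(\Omega)$ solves the interpolation problem $h(a_k)=g(a_k)$ on $\{a_k\}$. This last step is legitimate because $\{g(a_k)\}\in\ell^p_{(n+1+\beta)/p}$: by the sub-mean-value inequality (Lemma~\ref{l2}) over Kobayashi balls around the $a_k$, whose dilates have bounded overlap, $\sum_k\delta(a_k)^{n+1+\beta}|g(a_k)|^p\lesssim\|g\|_{p,\beta}^p$, together with \eqref{e1}. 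By Proposition~\ref{p1} the interpolant can be taken with $\|h\|_{p,\beta}\le M\,\|\{g(a_k)\}\|_{p,(n+1+\beta)/p}$, where $M$ is the interpolation constant of $\{a_k\}$; the standard point‑evaluation estimate $|h(b)|^p\lesssim\delta(b)^{-(n+1+\beta)}\|h\|_{p,\beta}^p$ (again Lemma~\ref{l2} and \eqref{e1}) then forces $|h(b)|$ to be small compared with $|g(b)|$, so that $\phi(b)=g(b)-h(b)\neq 0$, while $\phi(a_k)=g(a_k)-h(a_k)=0$ for every $k$.

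The heart of the matter — and where I expect the real work — is the choice of $g$: the claim that, because $\{a_k\}$ is interpolating, one can make $\sum_k\delta(a_k)^{n+1+\beta}|g(a_k)|^p$ arbitrarily small while keeping $g(b)$ of unit size; equivalently, that $b$ is not a common zero of all $A_\beta^p(\Omega)$ functions vanishing on $\{a_k\}$, i.e.\ that $\ker\{f\mapsto\{f(a_k)\}\}\not\subseteq\ker(\mathrm{ev}_b)$. This is precisely the step that exploits the sparsity of an interpolating sequence — the density gap between interpolation and sampling for Bergman spaces — and quantitatively it rests on the near-diagonal comparability \eqref{e2}--\eqref{e3}, the integral estimate \eqref{e4}, the bounded overlap of the dilated Kobayashi balls about the $a_k$, and the off-diagonal decay of $K$ on a strongly pseudoconvex domain (needed to keep $g$ small at those $a_k$ that lie near $\partial\Omega$ in the ``direction'' of $b$, where a plain power of $K(b,\cdot)$ is not small). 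I would also note that no quantitative interpolation constant is being claimed for $\{a_k\}\cup\{b\}$: when $b$ is close to $a_{k_0}$ the union is not separated, so Proposition~\ref{p1} need not apply to it; only the matching of arbitrary admissible data is asserted, which is exactly what the definition of an interpolating sequence asks for.
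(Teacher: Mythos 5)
Your reduction to producing a single function $\phi\in A_\beta^p(\Omega)$ with $\phi(a_k)=0$ for all $k$ and $\phi(b)\neq 0$ is exactly the paper's first step, and your handling of a point $b$ too close to some $a_{k_0}$ (delete it and restore the zero with a bounded affine factor) is sound. But the core of your argument is left unproved, and it is not a routine verification. You need a function $g\in A_\beta^p(\Omega)$ with $|g(b)|\simeq 1$ while $\sum_k\delta(a_k)^{n+1+\beta}|g(a_k)|^p$ is smaller than $M^{-p}\delta(b)^{n+1+\beta}$ (so that the interpolant $h$ of the data $\{g(a_k)\}$ satisfies $|h(b)|<|g(b)|$). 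The candidate you name, a high power of the normalised kernel $\bigl(K(b,\cdot)/K(b,b)\bigr)^{N}$, does not obviously work: by \eqref{e3} one only has $|K(b,a_k)|/K(b,b)\lesssim(\delta(b)/\delta(a_k))^{(n+1)/2}$ times the normalised off-diagonal kernel, so for points $a_k$ near $\partial\Omega$ the factor you are raising to the $N$-th power need not be less than $1$ unless one has a quantitative off-diagonal decay of $|K(b,w)|/\bigl(K(b,b)^{1/2}K(w,w)^{1/2}\bigr)$ in terms of $\beta(b,w)$, uniform up to the boundary. No such estimate appears in the paper (\eqref{e2} is only a near-diagonal statement, \eqref{e4} is an integral bound), and you do not supply one; the phrases ``suitably damped near $\partial\Omega$'' and ``off-diagonal decay of $K$'' are placeholders for precisely the analysis that would constitute the proof. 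As written, the existence of $g$ is an assertion, not a construction, so the argument has a genuine gap.

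The paper avoids this quantitative issue entirely by a soft duality argument. Assuming for contradiction that every $f\in A_\beta^p(\Omega)$ vanishing on $\{a_k\}$ also vanishes at $b$, the map $\Lambda(\{\lambda_k\})=f(b)$, where $f$ interpolates $f(a_k)=\delta(a_k)^{-(n+1+\beta)/p}\lambda_k$, is a well-defined bounded linear functional on $\ell^p$ (boundedness from Proposition~\ref{p1}), hence is given by some $\{c_j\}\in\ell^q$ with $c_j=f_j(b)$ for the ``basis'' interpolants $f_j$. Multiplying $f_j$ by the affine function $|a_j-b|^2-\overline{(a_j-b)}(z-b)$, which vanishes at $a_j$ but not at $b$, produces a function vanishing on all of $\{a_k\}$, whence $|a_j-b|^2c_j=0$ and $\Lambda\equiv 0$; this contradicts $f(b)\neq 0$ for, say, constant functions. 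If you want to salvage your constructive route, you would have to either prove the uniform off-diagonal kernel decay on $\Omega$ or restrict to $\mathbb{B}_n$ where explicit kernel formulas make it checkable; otherwise the duality argument is the efficient path.
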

\begin{proof}
It is enough to show that the union of a $A_\beta^p(\Omega)$-interpolating separated sequence and one point is $A_\beta^p(\Omega)$-interpolating. Let $\{a_k\}$ be an $A_\beta^p(\Omega)$-interpolating separated sequence and $b\in \Omega \diagdown \{a_k\}$. Hence, there is some $\delta>0$ such that $\beta(a_k,b)>\delta$, for all $k$.

We first claim that it is enough to find $f\in A_\beta^p(\Omega)$ with $f(a_k)=0$ for all $k$ and $f(b)\neq 0$. To see this, let $\{v_k\}\cup \{v_0\} \in \ell^p_{(n+1+\beta)/p}(\{a_k\}\cup\{b\})$ and let $g\in A_\beta^p(\Omega)$ be such that $g(a_k)=v_k$. Then the function
$$ F(z)=g(z) +\dfrac{v_0-g(b)}{f(b)} f(z)$$
belongs to $A_\beta^p(\Omega)$, and $F(a_k)=v_k$ for all $k$ and $F(b)=v_0$.

Suppose that all $f\in A_\beta^p(\Omega)$ with $f(a_k)=0$ for all $k$ have $f(b)=0$. This implies that for any $f\in A_\beta^p(\Omega)$, the value $f(b)$ is determined by the values of $f(a_k)$ since the difference between two functions with the same values on $\{a_k\}$ vanishes at $b$.

We define the functional $\Lambda:\ell^p \rightarrow \mathbb{C}$ by
$$\Lambda(\{\lambda_k\})=f(b),$$
where $f\in A_\beta^p(\Omega)$ is such that $f(a_k)=\delta(a_k)^{-\frac{n+1+\beta}{p}} \lambda_k$ for all $k$. 
Because $\{a_k\}$ is separated, as in Proposition \ref{p1}, one can see that $\{f(a_k)\delta(a_k)^{\frac{n+1+\beta}{p}}\}\in \ell^p$, for every $f\in A_\beta^p(\Omega)$. We will show that $\Lambda$ is identically $0$, which implies that $f(b)=0$, for all $f\in A_\beta^p(\Omega)$. This is a clear contradiction.

Since $f(b)$ is determined by the values $f(a_k)$ which are actually independent of $f$, $\Lambda$ is linear. It is also continuous:
$$\Lambda(\{\lambda_k\})=f(b)\lesssim \|f\|_{p,\beta}\leq M\|\{\delta(a_k)^{-\frac{n+1+\beta}{p}} \lambda_k\|_{p, \frac{n+1+\beta}{p}} \lesssim M\|\{\lambda_k\}\|_p$$
where $M$ denotes the interpolation constant of $\{a_k\}$. Thus, if $\frac{1}{p}+\frac{1}{q}=1$, then $\Lambda\in \ell^q$, in the sense that there exists some $\{c_k\}\in \ell^q$ such that
$$\Lambda(\lambda)=\sum \lambda_k c_k, \qquad \forall \lambda \in \ell^p.$$
Consider $\delta_{jk}$ as the Kronecker symbol  and the  function $f_j\in A_\beta^p(\Omega)$ with
$$f_j(a_k)=\delta(a_k)^{-\frac{n+1+\beta}{p}}\delta_{jk}.$$
Using the definition we have:
$$\Lambda(v^j)=f_j(b)=\sum \delta_{jk}c_k=c_j.$$
Now take the functions $F_j(z)=[|a_j-b|^2-\overline{(a_j-b)}(z-b)]f_j(z).$
Obviously, $F_j\in A_\beta^p(\Omega)$ and $F_j(a_k)=0$ for all $k$. Therefore,
$$0=F_j(b)=|a_j-b|^2f_j(b)=|a_j-b|^2c_j,$$
and hence $c_j=0$. This, shows that $\Lambda\equiv 0.$
\end{proof}

 Let $\{a_k\}$ be a sequence in $\Omega$. We define the values
$$K(\{a_k\},p,q)= \sup_{k\in \mathbb{N}} \sum_{j=1}^\infty \delta(a_k)^p \delta(a_j)^q| K(a_k,a_j)|^\frac{p+q}{n+1}.$$
Based on the above values of $K(\{a_k\},p,q)$, we give sufficient conditions (Theorems \ref{ta} and \ref{tb}) for a sequence in $\Omega$ to be $A^p_\beta (\Omega)$-interpolating. First, we need the following lemma.
The equivalence of parts (i) and (ii) below has been given in \cite[Theorem 4.2]{abate1}.

\begin{lemma} \label{l5}
The following conditions are equivalent.
\begin{itemize}
\item[(i)] $\Gamma=\{a_k\}$ is the union of a finite number of separated sequences.
\item[(ii)] $\sup_{z\in \Omega} N(z,r,\{a_k\})<\infty$ for all (some) $r\in (0,1)$.
\item[(iii)] $K(\{a_k\},p,q)<\infty$, for all $q>n$ and all $p\leq q$.
\item[(iv)] There exist $q> n,p$ such that  $K(\{a_k\},p,q)<\infty$.
\item[(v)] for all $p>0$ and $q>n$
$$ \sup_{z\in \Omega} \sum_{j=1}^\infty \delta(z)^p \delta(a_j)^q |K(z,a_j)|^\frac{p+q}{n+1}<\infty.$$
\item[(vi)] $ \sum_{j=1}^\infty \delta(a_j)^q \delta_{a_j} $ is a $(q-n-1)$-Carleson measure for $q>n$.
\end{itemize}
\end{lemma}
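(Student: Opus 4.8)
The plan is to prove the chain of implications $(i)\Rightarrow(iii)\Rightarrow(iv)\Rightarrow(ii)\Rightarrow(i)$, together with $(iii)\Leftrightarrow(v)$ and $(v)\Leftrightarrow(vi)$, using the equivalence of $(i)$ and $(ii)$ already established in \cite[Theorem 4.2]{abate1} as the backbone. The heart of the matter is the passage between the counting condition $(ii)$ and the summability conditions $(iii),(iv),(v)$; the Carleson-measure condition $(vi)$ will then follow by a routine reinterpretation.

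First I would show $(ii)\Rightarrow(v)$, which is the main estimate. Fix $z\in\Omega$, $p>0$, $q>n$. Using an $r$-lattice $\{b_m\}$ from Lemma \ref{l1}, decompose the sum over $j$ according to which ball $B(b_m,r)$ contains $a_j$; on each such ball, by \eqref{e1} one has $\delta(a_j)\simeq\delta(b_m)$ and by \eqref{e2}--\eqref{e3} (together with the submean-value estimate of Lemma \ref{l2} applied to the plurisubharmonic function $w\mapsto|K(z,w)|^{(p+q)/(n+1)}$ when $q>n$ so that the exponent times $(n+1)$ exceeds... one must be mildly careful here) one has $|K(z,a_j)|\lesssim|K(z,b_m)|$ up to the usual comparison on Kobayashi balls. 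Hence the sum is controlled, using $(ii)$ to bound the number of $a_j$ in each ball by a constant, by $\sum_m \delta(z)^p\delta(b_m)^q|K(z,b_m)|^{(p+q)/(n+1)}\nu(B(b_m,r))/\nu(B(b_m,r))$, which after inserting $\nu(B(b_m,r))\simeq\delta(b_m)^{n+1}$ becomes comparable to an integral $\int_\Omega \delta(z)^p\delta(w)^{q-n-1}|K(z,w)|^{(p+q)/(n+1)}d\nu(w)$. This integral is finite and bounded uniformly in $z$ precisely by \eqref{e4} with $\mathfrak p=(p+q)/(n+1)$ and $\alpha=q-n-1$: the hypotheses $\alpha>-1$ reads $q>n$, the hypothesis $\alpha<(n+1)(\mathfrak p-1)$ reads $q-n-1<p+q-n-1$, i.e. $p>0$, and $\mathfrak p>n/(n+1)$ holds since $q>n$; then the right-hand side of \eqref{e4} is $\delta(z)^{\alpha+n+1-(n+1)\mathfrak p}=\delta(z)^{-p}$, which cancels the factor $\delta(z)^p$. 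This gives $(v)$, and in particular $(iii)$ and $(iv)$ as special cases ($(v)$ taken at $z=a_k$).

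The remaining implications are easier. Clearly $(iii)\Rightarrow(iv)$. For $(iv)\Rightarrow(ii)$, suppose some $B(z_0,r)$ contained $N$ points $a_{j_1},\dots,a_{j_N}$; take $z=z_0$, drop all but the diagonal-type terms with $a_j\in B(z_0,r)$ (choosing $z_0$ close enough to the boundary to apply \eqref{e2}), use $\delta(a_{j_i})\simeq\delta(z_0)$ and $|K(z_0,a_{j_i})|\simeq\delta(z_0)^{-(n+1)}$ to see that the $k$-th sum in $K(\{a_k\},p,q)$ (taken at the index nearest $z_0$, or more cleanly replacing the $\sup_k$ by the analogous $\sup_z$ which is finite by $(iv)$ combined with the lattice argument above running backwards) is $\gtrsim N$; letting $N\to\infty$ contradicts finiteness, so $\sup_z N(z,r,\{a_k\})<\infty$. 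Finally, $(ii)\Leftrightarrow(i)$ is \cite[Theorem 4.2]{abate1}, and $(v)\Leftrightarrow(vi)$ is immediate from the definition of a Carleson measure together with the characterization recalled at the end of Section~2: the measure $\mu=\sum_j\delta(a_j)^q\delta_{a_j}$ satisfies $\mu(B(z,r))=\sum_{a_j\in B(z,r)}\delta(a_j)^q\simeq\delta(z)^q N(z,r,\{a_k\})$, which is $\lesssim\delta(z)^{q}=\delta(z)^{(q-n-1)+n+1}$ exactly when $(ii)$ holds, and one checks directly via \eqref{e4} (as above) that this Carleson condition is equivalent to the testing condition $(v)$ against the functions $K(z,\cdot)^{(p+q)/((n+1)p)}$, whose $A^p_{q-n-1}$-norms are comparable to $\delta(z)^{-p}$.

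The step I expect to be the main obstacle is $(ii)\Rightarrow(v)$: controlling $|K(z,a_j)|$ by its value at the lattice center uniformly requires knowing the kernel comparison $|K(z,w)|\simeq|K(z,w')|$ for $w,w'$ in a common Kobayashi ball, with constants independent of $z$ — this is where one must invoke Lemma \ref{l2} on the plurisubharmonic function $|K(z,\cdot)|^{s}$ (valid for the relevant range of $s$) rather than a naive pointwise bound, and then align the exponent bookkeeping so that \eqref{e4} applies with the stated restrictions $\tfrac{n}{n+1}<\mathfrak p<\infty$ and $-1<\alpha<(n+1)(\mathfrak p-1)$. Once that comparison is in hand the rest is the standard lattice-sum-to-integral passage.
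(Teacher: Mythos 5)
Your proposal follows essentially the same route as the paper's proof: the core step (ii)$\Rightarrow$(v) is the identical lattice-sum-to-integral argument (sub-mean-value property of the plurisubharmonic function $|K(z,\cdot)|^{(p+q)/(n+1)}$ via Lemma~\ref{l2}, the volume comparison \eqref{e1}, the bounded-overlap count from (ii), and then \eqref{e4} with $\alpha=q-n-1$ and exponent $(p+q)/(n+1)$, with exactly the bookkeeping you describe), the equivalence (i)$\Leftrightarrow$(ii) is quoted from Abate--Saracco, and the return direction uses \eqref{e2} to make each term in a Kobayashi ball comparable to $1$; the only structural difference is that you go (iv)$\Rightarrow$(ii) directly whereas the paper routes it as (iv)$\Rightarrow$(vi)$\Rightarrow$(ii) through the Carleson ball condition $\mu(B(z,r))\lesssim\delta(z)^{q}$. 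The one point to tighten is that (ii) demands a bound for \emph{all} $z\in\Omega$, not just $z$ near $\partial\Omega$ where \eqref{e2} applies, so your (iv)$\Rightarrow$(ii) step still needs a separate argument for the interior region (the paper disposes of it by testing the Carleson condition on $f\equiv 1$ to force $\sum_k\delta(a_k)^q<\infty$, hence only finitely many $a_k$ in any compact set; alternatively an accumulation-point/continuity argument for $K$ near the diagonal works).
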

\begin{proof}

(ii)$\Rightarrow$ (v): Let $0<r<1$, the  plurisubhormonicity of $|K(z,a_j)|^\frac{p+q}{n+1}$ and \ref{e1} imply that
$$\sum_{a_j\in \Gamma} \delta(z)^p  \delta(a_j)^q |K(z,a_j)|^\frac{p+q}{n+1}\lesssim
\sum_{a_j\in \Gamma} \delta(z)^p \int_{B(a_j,r)}  |K(z,w)|^\frac{p+q}{n+1} \delta(w)^{q-n-1} d\nu(w)$$
Statement (ii) tells us that the above value is, up to a constant, less than or equal to
$$\delta(z)^p \int_{\Omega}  |K(z,w)|^\frac{p+q}{n+1} \delta(w)^{q-n-1} d\nu(w)$$
Now, \ref{e4} gives the desire result.

(v)$\Rightarrow$ (iii) $\Rightarrow$ (iv) are abvious.

(iv) $\Rightarrow$ (vi)
Let $\delta(z)>\delta_r$. Then there is some $0<s<1$ such that $B(z,r)\subset \{w: \ \delta(w)\geq s\}$  by the compactness of $ \{w: \ \delta(w)\geq s\}$;
$$\dfrac{\mu(B(z,r))}{\delta(z)^{q}} \leq \dfrac{\mu(\{w: \ \delta(w)\geq s\})}{\delta(z)^{q}}\lesssim 1.$$
Now Let $\delta(z)<\delta_r$. Let $z\in B(a_k,r)$ for some $k$ (otherwise $\mu(B(z,r))=0$). Then
$$\mu(B(z,r))=\sum_{a_j\in \Gamma\cap B(z,r)} \delta(a_j)^q.$$
 Using the triangle inequality and the monotonicity of $\tanh$, if, $w\in B(a_k,r)$ and $r^\prime=\tanh (2\tanh^{-1} (r))$, then $B(w,r)\subseteq B(a_k,r^\prime)$. Thus, we can let $\Gamma\cap B(z,r)\subset B(a_k,r^\prime)$. Therefore, by \ref{e2},
\begin{eqnarray*}
\mu(B(z,r)) &\simeq& \sum_{a_j\in \Gamma \cap B(z,r)} \delta(a_j)^q \delta(a_k)^p |K(a_k,a_j)|^\frac{p}{n+1}\\
&\simeq& \delta(z)^q \sum_{a_j\in \Gamma \cap B(z,r)} \delta(a_j)^q \delta(a_k)^p |K(a_k,a_j)|^\frac{p+q}{n+1} \lesssim \delta(z)^q.
\end{eqnarray*}

(vi) $\Rightarrow$ (ii) If $\delta(a_k)\nrightarrow 0$, then there is a subsequence $\{a_{n_k}\}$ of $\{a_k\}$ and $\delta>0$ such that $\delta(a_{n_k})>\delta$, for all $k\in \mathbb{N}$. Thus, for $f\equiv 1$;
$$\int_\Omega |f|^p d\mu =\sum \delta (a_k)^q\geq \sum \delta (a_{n_k})^q =\infty.$$
This contradicts (v). Hence, $\sup_{\delta(z)>\delta_r} N(z,r,\Gamma)<\infty$.
Now let $\delta(z)<\delta_r$, hence, by \ref{e2},
$$N(z_0,r,\Gamma) \lesssim \sum_{z\in \Gamma \cap B(z_0,r)} |K(z_0,z)|^2$$
Inequality \ref{e1}, the plurisubhormonicity of $|K(z,a_j)|^2$, and \ref{e4} imply that
\begin{eqnarray*}
N(z_0,r,\Gamma) &\lesssim&  \delta(z_0)^{-q} \sum_{z\in \Gamma \cap B(z_0,r)} \delta(z)^q |K(z_0,z)|^2\\
&\lesssim& \delta(z_0)^{-q} \int_\Omega   |K(z_0,w)|^2 \delta(w)^{q-n-1} d\nu(w)\\
&\lesssim& \delta(z_0)^{-q} \delta(z_0)^{q-n-1+n+1-2(n+1)}=1.
\end{eqnarray*}
\end{proof}

\begin{theorem}\label{ta}
Let $\{a_k\}$ be a union of a finite number of separated sequences in $\Omega$. If there exists an $m>0$ such that $K(\{a_k\},m,n+1+\beta)<1$, then $\{a_k\}$ is $A^1_\beta (\Omega)$-interpolating.
\end{theorem}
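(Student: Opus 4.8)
The plan is to construct an explicit bounded linear interpolation operator and then correct it by a Neumann series, just as one does on the disc or the ball. Set $s=\frac{m+n+1+\beta}{n+1}$; since $m>0$ and $\beta>-1$ one has $s>\frac{n+1+\beta}{n+1}>\frac{n}{n+1}$ and $-1<\beta<(n+1)(s-1)=m+\beta$, so \eqref{e4} applies with exponent $s$ and weight $\beta$. For each $j$ put
$$g_j(z)=\left(\frac{K(z,a_j)}{K(a_j,a_j)}\right)^{\!s},$$
a holomorphic function on $\Omega$ with $g_j(a_j)=1$. (That $g_j$ is single-valued requires the relevant non-vanishing of $K(\cdot,a_j)$; on strongly pseudoconvex domains this can be arranged using the boundary asymptotics \eqref{e2}, or by replacing $K$ throughout by its zero-free Henkin--Ramirez substitute, which obeys the same estimates.) Define $S\colon \ell^1_{n+1+\beta}(\{a_k\})\to H(\Omega)$ by $Sv=\sum_j v_j g_j$, and let $R\colon A^1_\beta(\Omega)\to \ell^1_{n+1+\beta}(\{a_k\})$ be the sampling map $Rf=\{f(a_k)\}$. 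The aim is to show $RS$ is invertible on $\ell^1_{n+1+\beta}$; once that is known, for any datum $w\in \ell^1_{n+1+\beta}$ the function $f=S(RS)^{-1}w$ lies in $A^1_\beta(\Omega)$ and satisfies $f(a_k)=w_k$ for every $k$, which is exactly the required interpolation.

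Two boundedness statements come first. For $S$, I would estimate $\|g_j\|_{1,\beta}$ by combining \eqref{e4} (and the symmetry $|K(z,w)|=|K(w,z)|$) to get $\int_\Omega |K(z,a_j)|^{s}\delta(z)^\beta\,d\nu(z)\lesssim \delta(a_j)^{\beta+n+1-(n+1)s}$ with \eqref{e3} to get $|K(a_j,a_j)|^{-s}\lesssim \delta(a_j)^{s(n+1)}$; multiplying gives $\|g_j\|_{1,\beta}\lesssim \delta(a_j)^{n+1+\beta}$. Hence $\sum_j |v_j|\,\|g_j\|_{1,\beta}\lesssim \|v\|_{1,n+1+\beta}$, so the series for $Sv$ converges in the Banach space $A^1_\beta(\Omega)$, hence locally uniformly, so $Sv\in H(\Omega)\cap A^1_\beta(\Omega)$ and $S$ is bounded. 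For $R$, the sub-mean-value inequality (Lemma \ref{l2}) together with \eqref{e1} yields $|f(a_k)|\,\delta(a_k)^{n+1+\beta}\lesssim \int_{B(a_k,R)}|f|\,\delta^\beta\,d\nu$; summing over $k$ and invoking that $\{a_k\}$, being a union of finitely many separated sequences, makes the balls $\{B(a_k,R)\}$ have uniformly bounded overlap (Lemma \ref{l5}, (i)$\Leftrightarrow$(ii)) gives $\|Rf\|_{1,n+1+\beta}\lesssim \|f\|_{1,\beta}$.

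Next I would write $RS=I+E$, where $E$ is the off-diagonal matrix with $E_{kj}=g_j(a_k)$ for $k\ne j$ and $E_{jj}=0$; one also checks that $\sum_j v_j g_j(a_k)$ converges absolutely for each fixed $k$ (from the bound on $|g_j(a_k)|$ coming from \eqref{e3}), so that this really is the matrix product. On the weighted space $\ell^1_{n+1+\beta}$ the norm of $E$ is the weighted column supremum
$$\|E\|=\sup_{j}\ \frac{1}{\delta(a_j)^{n+1+\beta}}\sum_{k\ne j}\delta(a_k)^{n+1+\beta}\,|g_j(a_k)|.$$
Replacing $|K(a_j,a_j)|^{-s}$ by $\lesssim \delta(a_j)^{s(n+1)}$ via \eqref{e3}, using $s(n+1)=m+n+1+\beta$ and $|K(a_k,a_j)|=|K(a_j,a_k)|$, the right-hand side is controlled by $\sup_j \delta(a_j)^{m}\sum_{k\ne j}\delta(a_k)^{n+1+\beta}|K(a_j,a_k)|^{\frac{m+n+1+\beta}{n+1}}$, that is, by $K(\{a_k\},m,n+1+\beta)$ with the diagonal term $\delta(a_j)^{m+n+1+\beta}|K(a_j,a_j)|^{s}\simeq 1$ removed. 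Since the diagonal term is comparable to $1$ while $K(\{a_k\},m,n+1+\beta)<1$, the off-diagonal column sum drops strictly below $1$, so $\|E\|<1$ and $I+E$ is invertible by the Neumann series, which closes the argument as above.

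I expect the main obstacle to be precisely this last step: converting $K(\{a_k\},m,n+1+\beta)<1$ into $\|E\|<1$ requires honest tracking of the constants implicit in $|K(z,z)|\simeq\delta(z)^{-(n+1)}$, because these same constants enter both the upper bound for $|g_j(a_k)|$ and the size of the diagonal term being subtracted, and one needs the subtraction to win. A secondary, routine point is the analytic legitimacy of the non-integer power $g_j$ and the absolute convergence of the series defining $Sv$ and $(RS)v$; both follow from the size estimates already obtained for $\|g_j\|_{1,\beta}$ and for $|g_j(a_k)|$.
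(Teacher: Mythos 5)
Your overall strategy is the same as the paper's: build an approximate extension operator out of powers of the Bergman kernel, check that it and the restriction map are bounded (you do the restriction map via sub-mean-value plus finite overlap, the paper via the Carleson-measure characterization in Lemma \ref{l5} --- these are interchangeable), and then invert $RS$ by a Neumann series. The boundedness estimates for $S$ and $R$ are fine. The problem is the step you yourself flag as ``the main obstacle'': it is a genuine gap, and the assertion that ``the subtraction wins'' is not justified and in general false. Quantitatively: write $d_j=\delta(a_j)^{n+1}K(a_j,a_j)$, so that \eqref{e3} gives only $c\le d_j\le C$ for some unknown $c,C$. The $j$-th column sum of your $E$ equals $d_j^{-s}\bigl(\rho_j-d_j^{s}\bigr)$, where $\rho_j=\sum_{k}\delta(a_j)^{m}\delta(a_k)^{n+1+\beta}|K(a_j,a_k)|^{s}$ is the full row sum appearing in $K(\{a_k\},m,n+1+\beta)$ and $d_j^{s}$ is its diagonal term. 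The hypothesis $\rho_j<1$ therefore only yields $\|E\|\le\sup_j\bigl(d_j^{-s}-1\bigr)$, and this is $<1$ only if $d_j^{s}>1/2$ for every $j$; \eqref{e3} provides no such lower bound, since its implicit constants are uncontrolled and are moreover raised to the power $s=\frac{m+n+1+\beta}{n+1}$. So $\|E\|<1$ does not follow from $K(\{a_k\},m,n+1+\beta)<1$ with your normalization.

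The paper sidesteps exactly this by choosing the other normalization: its extension is $E(v)(z)=\sum_k v_k\,\delta(a_k)^{n+1+\beta+m}K(a_k,z)^{\frac{n+1+\beta+m}{n+1}}$, i.e.\ the weight is $\delta(a_k)^{n+1+\beta+m}$ rather than $K(a_k,a_k)^{-s}$. With that choice the off-diagonal part of $TE-I$ is bounded by \emph{exactly} $K(\{a_k\},m,n+1+\beta)$, with constant $1$ and no appeal to \eqref{e3}, which is precisely what the hypothesis is tailored to. (The trade-off is that the diagonal of $TE$ is then $d_k^{s}$ rather than $1$; note that $\rho_k<1$ forces $d_k^{s}<1$, so the diagonal discrepancy $1-d_k^{s}$ is also controlled by the off-diagonal deficit $1-\rho_k+$(off-diagonal sum), which is how one reconciles the two normalizations.) To repair your argument you should either switch to the paper's weights, or prove the additional estimate $d_j^{s}>1/2$ --- which does not hold in general and is not available from the lemmas quoted. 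Your remarks about single-valuedness of the fractional power and absolute convergence are fine and are shared, silently, by the paper's own construction.
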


\begin{proof}
 Consider the restriction map $T: A_\beta^p(\Omega)\rightarrow \ell _{\frac{n+1+\beta}{p}}^p$, where
$T(f)=\{f(a_k)\}$. Hence,
$$\|T(f)\|_{\frac{n+1+\beta}{p}}^p=\sum |f(a_k)|^p \delta(a_k)^{n+1+\beta}=\int_\Omega |f|^p d\mu. $$
Where $\mu=\sum \delta(a_k)^{n+1+\beta} \delta_{a_k}$. Since $\{a_k\}$ is a finite union of separated sequences, by Lemma \ref{l5}, $\mu$ is a $\beta$-Carleson measure. Therefore, $T$ is a bounded operator.
 In order to show that $T$ is onto, we define, given $v=\{v_k\}\in \ell _{n+1+\beta}^1(\{a_k\})$, the approximation extension
\begin{equation*}
E(v)(z)=\sum v_k \delta(a_k)^{n+1+\beta+m} K(a_k,z)^\frac{n+1+\beta+m}{n+1}.
\end{equation*}
Using \ref{e4}, it is straightforward that 
\begin{eqnarray*}
\|E(v)\|_{1,\beta} &\leq& \int_\Omega \delta(z)^\beta \sum |v_k| \delta(a_k)^{n+1+\beta+m} |K(a_k,z)|^\frac{n+1+\beta+m}{n+1}d\nu(z)\\
&=&   \sum |v_k| \delta(a_k)^{n+1+\beta+m} \int_\Omega |K(a_k,z)|^\frac{n+1+\beta+m}{n+1}\delta(z)^\beta d\nu(z)\\
&\lesssim& \sum |v_k|\delta(a_k)^{n+1+\beta}=\|v\|_{\ell _{n+1+\beta}^1(\{a_k\})}.
\end{eqnarray*}
Hence, $E(v)$ is in $A_\beta^1(\Omega)$. On the other hand, $TE-I$,  regarded as an operator on $\ell _{n+1+\beta}^1(\{a_k\})$, has norm strictly smaller than $1$:
\begin{eqnarray*}
\|TE(v)-v\|&=& \sum \delta(a_k)^{n+1+\beta} |TE(v)_k-v_k|\\
&\leq& \sum \delta(a_k)^{n+1+\beta} \sum_{j: j\neq k} |v_j| \delta(a_j)^{n+1+\beta+m} | K(a_k,a_j)|^{\frac{n+1+\beta+m}{n+1}}\\
&=&  \sum_{j=1}^\infty  \delta(a_j)^{n+1+\beta} |v_j|  \sum_{k: k\neq j}\delta(a_j)^{m} \delta(a_k)^{n+1+\beta} | K(a_k,a_j)|^{\frac{n+1+\beta+m}{n+1}}\\
&\leq& C\|v\|.
\end{eqnarray*}
Where, $C=K(\{a_k\},m,n+1+\beta)<1$.  This shows that $T$ is onto and the proof is complete.
\end{proof}

\begin{theorem} \label{tb}
Let $\{a_k\}$ be a union of a finite number of separated sequences in $\Omega$, $1<p<\infty$, and $q$ be its conjugate exponent. If there exist $c_1,c_2>0$ such that $c_1c_2<1$ and
\begin{equation*}
K\Big(\{a_k\},\dfrac{n+1+\beta}{p},\dfrac{n+1+\beta}{q}\Big)<c_1^q, \qquad K\Big(\{a_k\},\dfrac{n+1+\beta}{q},\dfrac{n+1+\beta}{p} \Big)<c_2^p,
\end{equation*}
then $\{a_k\}$ is $A_\beta^p(\Omega)$-interpolating.
\end{theorem}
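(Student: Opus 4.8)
The plan is to reproduce the scheme used for Theorem~\ref{ta}, with the elementary Fubini estimate there (which is special to $p=1$) replaced by a Schur test adapted to the conjugate pair $(p,q)$. As in Theorem~\ref{ta}, one begins with the restriction operator $T\colon A_\beta^p(\Omega)\to \ell^p_{(n+1+\beta)/p}(\{a_k\})$, $T(f)=\{f(a_k)\}$, for which $\|T(f)\|^p_{(n+1+\beta)/p}=\int_\Omega |f|^p\,d\mu$ with $\mu=\sum_k\delta(a_k)^{n+1+\beta}\delta_{a_k}$. Since $\{a_k\}$ is a finite union of separated sequences and $n+1+\beta>n$ (as $\beta>-1$), Lemma~\ref{l5}(vi) shows $\mu$ is a $\beta$-Bergman Carleson measure, so $T$ is bounded; it then remains to build a bounded right inverse of $T$ up to a small perturbation.

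For the extension I would take
$$E(v)(z)=\sum_j v_j\,K(a_j,a_j)^{-\frac{n+1+\beta}{n+1}}\,K(z,a_j)^{\frac{n+1+\beta}{n+1}},$$
which is holomorphic in $z$, whose $j=k$ term at $z=a_k$ is exactly $v_k$, and in which, by \ref{e3}, the coefficient $K(a_j,a_j)^{-\frac{n+1+\beta}{n+1}}$ is comparable to $\delta(a_j)^{n+1+\beta}$. The first genuine task is to show $E\colon\ell^p_{(n+1+\beta)/p}(\{a_k\})\to A_\beta^p(\Omega)$ is bounded; equivalently, that the operator with non-negative kernel $H(z,j)=\delta(a_j)^{(n+1+\beta)/q}|K(z,a_j)|^{\frac{n+1+\beta}{n+1}}$ is bounded from $\ell^p$ into $L^p(\Omega,\delta^\beta\,d\nu)$. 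Here I would run the Schur test: write $|K(z,a_j)|^{\frac{n+1+\beta}{n+1}}=|K(z,a_j)|^{b}\,|K(z,a_j)|^{\frac{n+1+\beta}{n+1}-b}$, split $\delta(a_j)$ correspondingly, apply H\"older in $j$ with exponents $(p,q)$, estimate the $q$-power factor uniformly in $z$ by Lemma~\ref{l5}(v) (applicable because $\{a_k\}$ is a finite union of separated sequences), and integrate the remaining $p$-power factor by \ref{e4}. A short bookkeeping of exponents shows the auxiliary parameter $b$ can be chosen so that the admissibility conditions of Lemma~\ref{l5}(v) and of \ref{e4} hold simultaneously, using only $\beta>-1$.

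Next I would estimate $TE-I$, regarded as a matrix on $\ell^p_{(n+1+\beta)/p}(\{a_k\})$. By \ref{e3} its $(k,j)$-entry with $j\neq k$ is comparable to $\delta(a_k)^{(n+1+\beta)/p}\delta(a_j)^{(n+1+\beta)/q}|K(a_k,a_j)|^{\frac{n+1+\beta}{n+1}}$, so its $k$-th row sum is $\lesssim K\Big(\{a_k\},\frac{n+1+\beta}{p},\frac{n+1+\beta}{q}\Big)<c_1^q$ and, by $|K(a_k,a_j)|=|K(a_j,a_k)|$, its $j$-th column sum is $\lesssim K\Big(\{a_k\},\frac{n+1+\beta}{q},\frac{n+1+\beta}{p}\Big)<c_2^p$. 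Schur's lemma with constant test sequences then gives $\|TE-I\|\lesssim (c_1^q)^{1/q}(c_2^p)^{1/p}=c_1c_2<1$. Hence $TE=I+(TE-I)$ is invertible on $\ell^p_{(n+1+\beta)/p}(\{a_k\})$, and for each $v\in\ell^p_{(n+1+\beta)/p}$ the function $f=E\big((TE)^{-1}v\big)\in A_\beta^p(\Omega)$ satisfies $f(a_k)=v_k$ for all $k$, i.e.\ $\{a_k\}\in Int(A_\beta^p(\Omega))$.

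The main obstacle is the boundedness of $E$: for $p>1$ one cannot pull the sum out of the $L^p$-norm as in Theorem~\ref{ta}, so the Schur test is unavoidable, and the delicate point is to split the exponent $\frac{n+1+\beta}{n+1}$ between a ``diagonal'' part (absorbed by \ref{e4}) and a ``test'' part (absorbed by Lemma~\ref{l5}(v)) so that all range restrictions are met at once. A second, minor point is constant bookkeeping in the contraction step: the comparabilities coming from \ref{e3} hide multiplicative constants, which must be absorbed -- exactly as in Theorem~\ref{ta} -- so that the estimate of the previous paragraph genuinely yields $\|TE-I\|<1$ and not merely $\|TE-I\|\lesssim 1$.
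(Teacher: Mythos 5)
Your overall strategy coincides with the paper's: bound the restriction map $T$ via the Carleson-measure characterization, build the approximate extension $E$ from the powers $K(z,a_j)^{(n+1+\beta)/(n+1)}$ of the kernel, and invert $TE$ by a Neumann series after a discrete Schur/H\"older estimate in which the two hypotheses control the row and column sums and combine to give $c_1c_2<1$. Your treatment of the boundedness of $E$ by an explicit Schur test (splitting the kernel power between Lemma \ref{l5}(v) and \ref{e4}) is a legitimate and more self-contained substitute for the paper's one-line appeal to duality and the reproducing kernel; note only that the ``test'' factor cannot be made literally uniform in $z$ --- the admissibility ranges of \ref{e4} force a residual power $\delta(z)^{-\varepsilon}$, which must be carried into the second integration --- but the exponent bookkeeping does close for every $\beta>-1$.

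The one genuine gap is the point you yourself call ``minor.'' Normalizing the extension by $K(a_j,a_j)^{-(n+1+\beta)/(n+1)}$ instead of $\delta(a_j)^{n+1+\beta}$ makes the diagonal of $TE$ exactly the identity, but it makes the off-diagonal entries only \emph{comparable} (via \ref{e3}, with a constant $C_1\ge 1$ that is not under control) to $\delta(a_k)^{(n+1+\beta)/p}\delta(a_j)^{(n+1+\beta)/q}|K(a_k,a_j)|^{(n+1+\beta)/(n+1)}$. Schur's test then yields only $\|TE-I\|\le C_1c_1c_2$, and the hypothesis $c_1c_2<1$ does not make this $<1$. Your proposed remedy --- absorbing the constant ``exactly as in Theorem \ref{ta}'' --- has no counterpart there: in Theorem \ref{ta} the coefficients are the exact powers $\delta(a_k)^{n+1+\beta+m}$, so the off-diagonal bound carries no hidden constant. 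To use the hypothesis as stated you must take the paper's coefficients $\delta(a_j)^{n+1+\beta}$, for which the row and column sums are \emph{exactly} the quantities $K(\{a_k\},\cdot,\cdot)$ bounded by $c_1^q$ and $c_2^p$; the price is that the $j=k$ term of $TE(v)_k$ is then only comparable to $v_k$, a diagonal discrepancy that the paper's displayed computation silently drops and that needs separate treatment. Either way, some version of this constant problem must be confronted explicitly; as written, your contraction step does not establish $\|TE-I\|<1$.
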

\begin{proof}
Given $\{v_k\}\in \ell_{\frac{n+1+\beta}{p}}^p(\{a_k\})$, take the approximate extention
\begin{equation*}
E(v)(z)=\sum v_k \delta(a_k)^{n+1+\beta} K(z,a_k)^\frac{n+1+\beta}{n+1} .
\end{equation*}
Using the duality, the reproducing kernel for $A_\beta^p(\Omega)$, and Lemma \ref{l5} with $\mu=\sum \delta(a_k)^{n+1+\beta}\delta_{a_k}$, one has $\|E(v)\|\leq C\|v\|$.

On the other hand, if $T$ denotes the operator on $A_\beta^p(\Omega)$ associated with $\{a_k\}$, we have $\|TE-I\|<1$, since
\begin{eqnarray*}
\|TE(v)-v\|&=& \sum \delta(a_k)^{n+1+\beta} |TE(v)_k-v_k|^p\\
&\leq& \sum_{k=1}^\infty \Big(  \sum_{j: j\neq k} \delta(a_k)^\frac{n+1+\beta}{p}\delta(a_j)^\frac{n+1+\beta}{q} | K(a_k,a_j)|^{\frac{n+1+\beta}{n+1}}\Big)^\frac{p}{q}\\
&\times&  \sum_{j: j\neq k} \delta(a_k)^\frac{n+1+\beta}{p}\delta(a_j)^\frac{n+1+\beta}{q} | K(a_k,a_j)|^{\frac{n+1+\beta}{n+1}} \delta(a_j)^{n+1+\beta}|v_j|^p\\
&\leq& c_1^p  \sum_{j=1}^\infty  \delta(a_j)^{n+1+\beta} |v_j|^p  \sum_{k: k\neq j}\delta(a_j)^\frac{n+1+\beta}{p} \delta(a_k)^\frac{n+1+\beta}{p} | K(a_k,a_j)|^\frac{n+1+\beta}{n+1}\\
&\leq& (c_1c_2)^p \|v\|.
\end{eqnarray*}
This shows that $TE$ is invertible and so $T$ is onto.
\end{proof}

\begin{corollary} \label{c1}
Let $1\leq p<\infty$, and 
 $\{a_k\}$ be a union of a finite number of separated sequences in $\Omega$. Then there is a positive constant $D$ such that if $\delta(a_k)\leq \frac{1}{2^k D}$, then $\{a_k\}$ is an interpolating sequence for $A_\beta^p(\Omega)$.
\end{corollary}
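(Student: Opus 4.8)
The plan is to deduce the conclusion from the sufficient conditions in Theorems \ref{ta} and \ref{tb}. For $p=1$ it is enough to produce some $m>0$ with $K(\{a_k\},m,n+1+\beta)<1$, and for $1<p<\infty$ it is enough to produce $c_1,c_2>0$ with $c_1c_2<1$ and
\[
K\Bigl(\{a_k\},\tfrac{n+1+\beta}{p},\tfrac{n+1+\beta}{q}\Bigr)<c_1^q ,\qquad
K\Bigl(\{a_k\},\tfrac{n+1+\beta}{q},\tfrac{n+1+\beta}{p}\Bigr)<c_2^p .
\]
Since $\{a_k\}$ is a finite union of separated sequences, Lemma \ref{l5} already gives that every quantity $K(\{a_k\},s,t)$ occurring here is finite; the content of the corollary is that the sparsity hypothesis $\delta(a_k)\le 2^{-k}/D$ --- with $D$ large enough that every $a_k$ lies in the regime where the sharp kernel estimates \ref{e2} apply --- pushes these numbers below the stated thresholds.

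So I would fix a small $r\in(0,1)$ and estimate a generic $K(\{a_k\},s,t)=\sup_k\delta(a_k)^s\sum_j\delta(a_j)^t|K(a_k,a_j)|^{(s+t)/(n+1)}$. Separate the diagonal term $j=k$, which by \ref{e3} is comparable to $1$ (this is precisely the factor turning the operator $TE$ in the proofs of Theorems \ref{ta} and \ref{tb} into an invertible diagonal operator plus the genuine error), from the off-diagonal tail. For the tail, plurisubharmonicity of $w\mapsto|K(a_k,w)|^{(s+t)/(n+1)}$ together with Lemma \ref{l2} and \ref{e1} lets one replace point values by averages over the balls $B(a_j,r)$, and their uniformly bounded overlap (Lemma \ref{l5}) bounds the tail by $\sum_{j\ne k}\int_{B(a_j,r)}|K(a_k,w)|^{(s+t)/(n+1)}\delta(w)^{t-n-1}\,d\nu(w)$ times $\delta(a_k)^s$. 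Here the hypothesis enters via the dyadic boundary shells $\Omega_\ell=\{w:2^{-\ell-1}/D<\delta(w)\le 2^{-\ell}/D\}$: because $\delta(a_j)\le 2^{-j}/D$, a point $a_j$ lies in $\Omega_\ell$ only for $\ell\ge j$, so at most the $M-1$ other points sharing the shell of $a_k$ contribute a bounded amount each, while the points in the remaining shells --- having distances to the boundary that differ by factors of $2$ --- contribute, via \ref{e1} and \ref{e4}, a finite combination of convergent geometric series $\sum_{\ell\ge1}2^{-\ell\gamma}$ with exponents $\gamma=\gamma(s,t,\beta,n)>0$ (positivity here using $\beta>-1$, so that $n+1+\beta>n\ge1$). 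The upshot is a bound $K(\{a_k\},s,t)\le C_0+S(s,t)$, where $C_0$ absorbs the diagonal and the boundedly many near-diagonal terms and $S(s,t)$ is the geometric remainder.

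The main obstacle is the final comparison: one must verify $C_0+S(s,t)<1$ when $p=1$, and $<c_1^q$ (together with $<c_2^p$ for the symmetric quantity and $c_1c_2<1$) when $1<p<\infty$. The factor-$2$ decay is what makes room for this, but it has to beat the constants coming from \ref{e2}--\ref{e4}. For $p=1$ the balancing is done by the exponent $m$: the ``$j<k$'' part of $S$ carries a factor $2^{-m\ell}$ and so shrinks as $m$ grows, while the ``$j>k$'' part carries the fixed factor $2^{-\ell(n+1+\beta)}$ but a prefactor that grows with $m$, so $m$ is chosen large enough to kill the first without inflating the second --- such a window existing because $\beta>-1$ makes all the series genuinely summable. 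For $1<p<\infty$ there is no free exponent, so one instead uses the symmetry of the two $K$-quantities and the latitude in $c_1,c_2$: since $n+1+\beta>n\ge1$ the relevant ratios $2^{-(n+1+\beta)/p}$ and $2^{-(n+1+\beta)/q}$ are both $<1$, a direct estimate with \ref{e3}--\ref{e4} shows that each of the two quantities is strictly below $1$, and one then picks $c_1,c_2$ between the two sides with $c_1c_2<1$. Making all these constants explicit and checking the strict inequalities is where the real work of the proof lies.
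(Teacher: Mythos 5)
Your reduction to Theorems \ref{ta} and \ref{tb} matches the paper, but your estimation strategy does not, and it leaves a genuine gap at exactly the point you flag as ``the real work.'' The paper's proof is a short pointwise computation: it bounds $|K(a_k,a_j)|$ via $K(a_k,a_k)^{1/2}K(a_j,a_j)^{1/2}\lesssim \delta(a_k)^{-(n+1)/2}\delta(a_j)^{-(n+1)/2}$ using \ref{e3}, so that each term of $K(\{a_k\},m,n+1+\beta)$ is controlled by a constant times a product of \emph{positive} powers of $\delta(a_k)$ and $\delta(a_j)$, and then uses $\delta(a_j)\le 2^{-j}/D$ to sum a geometric series and obtain a bound of the form $C'/D$ with $C'$ independent of $D$; the corollary then holds by choosing $D>C'$. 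No plurisubharmonicity, no Lemma \ref{l2}, no dyadic shells, and no integral estimate \ref{e4} enter. The machinery you invoke is that of Lemma \ref{l5} and Theorem \ref{te}, which only yields \emph{finiteness} of the $K$-quantities, never smallness.

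The concrete gap is that you never extract a negative power of $D$ from the hypothesis $\delta(a_k)\le 2^{-k}/D$. You use $D$ only to place the points near the boundary so that \ref{e2} applies, and then assert that ``a direct estimate with \ref{e3}--\ref{e4} shows that each of the two quantities is strictly below $1$'' (for $p>1$) and that a large $m$ suffices (for $p=1$). There is no reason for this: the implicit constants in \ref{e1}--\ref{e4} and in the bounded-overlap count of Lemma \ref{l1} are fixed constants of the domain that may be large, and the factor-$2$ decay across your shells produces a convergent series, not one with sum less than $1$. Your own observation that the diagonal term is $\simeq 1$ already shows that, without shrinking something, $K(\{a_k\},s,t)$ (which as defined includes $j=k$) need not drop below $1$. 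The only quantity in the statement that is free to be made large is $D$, so the proof must be organized so that every term carries a factor $D^{-\gamma}$ with $\gamma>0$ --- which forces careful bookkeeping: after the kernel estimate the exponents of $\delta(a_k)$ and $\delta(a_j)$ must both come out nonnegative and not both zero. Deferring ``checking the strict inequalities'' defers the entire content of the corollary.
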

\begin{proof}
We prove only the case $p=1$. The case $p>1$ is obtained similarly.
Let $C$ be the constant that
$$|K(z,z)|\leq C \delta(z)^{-(n+1)}, \qquad \forall z\in \Omega.$$
 The following calculations show that if $D$ is such that $C^\frac{m+n+\beta+1}{n+1} /D<1$, then the preceding theorem implies that our claim holds.
\begin{eqnarray*}
K(\{a_k\},m,n+1+\beta) &=& \sup \sum_{j=1}^\infty \delta(a_k)^m \delta(a_j)^{n+\beta+1} |K(a_k,a_j)|^\frac{m+n+\beta+1}{n+1}\\
 &\leq& C^\frac{m+n+\beta+1}{n+1} \sup \sum_{j=1}^\infty \delta(a_k)^m \delta(a_j)^{n+\beta+1} \delta(a_k)^{-m/2} \delta(a_j)^{-(n+\beta+1)/2}\\
&=&C^\frac{m+n+\beta+1}{n+1} \sup \sum_{j=1}^\infty \delta(a_k)^{m/2} \delta(a_j)^{(n+\beta+1)/2}\\
&\leq& C^\frac{m+n+\beta+1}{n+1}/D  \sum_{j=1}^\infty (\dfrac{1}{2})^j=C^\frac{m+n+\beta+1}{n+1} /D.
\end{eqnarray*}
Theorem \ref{ta} gives the result.
\end{proof}

\begin{theorem} \label{te}
Let $1\leq p<\infty$, $\{a_k\}$ be a union of a finite number of separated sequences in $\Omega$ and
\begin{itemize}
\item if $p=1$, then $\beta>n-1$, and
\item if $p>1$, then $\beta> \max \{n(2p-1)-1, n(2q-1)-1\}$,  where $q$ is the conjugate exponent of $p$.
\end{itemize}
 Then $\{a_k\}$ is an $A_\beta^p(\Omega)$-interpolation sequence. Moreover, if $\{a_k\}$ is a $r$-lattice sequence, then it is an $A_\beta^p(\Omega)$-interpolation sequence.
\end{theorem}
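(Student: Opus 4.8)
The plan is to derive the theorem from Theorems~\ref{ta} and~\ref{tb}, after first peeling off the points that remain away from $\partial\Omega$ by means of Theorem~\ref{tc}.

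\textbf{Two reductions.} The ``moreover'' clause needs no separate argument: by Lemma~\ref{l1} an $r$-lattice $\{a_k\}$ satisfies $\sup_z N(z,r,\{a_k\})<\infty$, and the balls $B(a_k,r/3)$ are pairwise disjoint, so by Lemma~\ref{l5}(ii) it is a finite union of separated sequences and the first assertion applies to it. For the first assertion, note that every finite union of separated sequences has $\delta(a_k)\to 0$: the pairwise disjoint Kobayashi balls $B(a_k,\delta/3)$ have $\nu$-measure $\simeq \delta(a_k)^{n+1}$ by~\eqref{e1}, so only finitely many $a_k$ can lie in a set $\{w:\delta(w)\ge s\}$, which has finite volume. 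Hence $F_\varepsilon:=\{a_k:\delta(a_k)\ge\varepsilon\}$ is finite for every $\varepsilon>0$, and by Theorem~\ref{tc} it suffices to prove that the tail $\Gamma_\varepsilon:=\{a_k:\delta(a_k)<\varepsilon\}$ is $A^p_\beta(\Omega)$-interpolating, with $\varepsilon$ small and at our disposal.

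\textbf{Verifying the hypotheses of Theorems~\ref{ta}/\ref{tb}.} What has to be produced is, when $p=1$, some $m>0$ with $K(\Gamma_\varepsilon,m,n+1+\beta)<1$, and when $p>1$, constants $c_1,c_2>0$ with $c_1c_2<1$ realizing the two inequalities of Theorem~\ref{tb}. Lemma~\ref{l5}(iii) already gives finiteness of all the quantities $K(\Gamma_\varepsilon,\cdot,\cdot)$ involved; the real task is to make them small. I would do so by rerunning the estimate in the implication (ii)$\Rightarrow$(v) of Lemma~\ref{l5} while keeping track of the geometry: fix a Kobayashi radius $r_0$ and split each inner sum $\sum_j$ into a ``near'' block, $\{a_j\in B(a_k,r_0)\}$, which by finite overlap has a bounded number of terms, each with $\delta(a_j)\simeq \delta(a_k)$ and $|K(a_k,a_j)|\simeq \delta(a_k)^{-(n+1)}$ by~\eqref{e1}--\eqref{e2}, and a complementary ``far'' block, controlled via the plurisubharmonicity of $w\mapsto |K(a_k,w)|^{t}$ (Lemma~\ref{l2}) together with the decay of the Bergman kernel in the Kobayashi distance behind~\eqref{e4} and \cite{li,hu}, by an integral over $\Omega\setminus B(a_k,r_0)$ that~\eqref{e4} estimates and that shrinks as the excised ball is enlarged. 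The hypothesis $\beta>\max\{n(2p-1)-1,\,n(2q-1)-1\}$ enters precisely here: it is exactly the range in which the exponent $t$, equal to $\tfrac{m+n+1+\beta}{n+1}$ when $p=1$ and to $\tfrac{n+1+\beta}{n+1}$ when $p>1$, and the accompanying weights fall strictly inside the admissible window of~\eqref{e4}, and, for $p>1$, in which the extension operator $E$ of Theorem~\ref{tb} maps $\ell^p_{(n+1+\beta)/p}$ boundedly into $A^p_\beta(\Omega)$ (via duality and the mapping properties of the Bergman projection). That slack is what one feeds, together with the freedom to shrink $\varepsilon$, into pushing $K(\Gamma_\varepsilon,\cdot,\cdot)$ below the required threshold.

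\textbf{Conclusion and the main difficulty.} Once this is secured, Theorem~\ref{ta} (if $p=1$) or Theorem~\ref{tb} (if $p>1$) gives that $\Gamma_\varepsilon$ is $A^p_\beta(\Omega)$-interpolating, and Theorem~\ref{tc} puts back the finitely many points of $F_\varepsilon$, finishing the argument. The genuine obstacle is entirely in the preceding paragraph: Lemma~\ref{l5} is qualitative, so deducing a strict inequality ``$<1$'' from it is not automatic, and one must actually exploit the size of $\beta$ --- through the interplay of the kernel's Kobayashi-distance decay, the finite-overlap bound, and the room to shrink $\varepsilon$ --- rather than merely invoke finiteness. For a single separated sequence, and in particular for an $r$-lattice, the near block is empty as soon as $r_0$ is below the separation constant, so there the estimate is considerably more transparent.
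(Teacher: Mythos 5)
Your high-level architecture coincides with the paper's: pass to a tail of the sequence, verify the smallness hypotheses of Theorems~\ref{ta}/\ref{tb} for that tail, and restore the finitely many discarded points with Theorem~\ref{tc}; your preliminary reductions (the ``moreover'' clause via Lemma~\ref{l1}, the finiteness of $\{a_k:\delta(a_k)\ge\varepsilon\}$ via disjoint Kobayashi balls and \eqref{e1}) are fine. But the entire content of the theorem lives in the step you leave as ``the genuine obstacle'': you never actually produce $K(\Gamma_\varepsilon,m,n+1+\beta)<1$, nor the constants $c_1,c_2$ with $c_1c_2<1$. Worse, the route you sketch for it cannot work. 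In $K(\Gamma,p',q')=\sup_k\sum_j\delta(a_k)^{p'}\delta(a_j)^{q'}|K(a_k,a_j)|^{(p'+q')/(n+1)}$ the diagonal term $j=k$ equals $\delta(a_k)^{p'+q'}|K(a_k,a_k)|^{(p'+q')/(n+1)}\simeq 1$ by \eqref{e3}, and more generally every ``near'' term with $a_j\in B(a_k,r_0)$ is $\simeq 1$ by \eqref{e1}--\eqref{e2}. These contributions are bounded \emph{below} by a constant depending only on $\Omega$, uniformly in $\varepsilon$ and $r_0$, so no amount of far-field decay harvested from \eqref{e4} and no shrinking of $\varepsilon$ will push the supremum below $1$. (Your remark that the near block is empty for a separated sequence overlooks that the sum defining $K$ includes $j=k$.)

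You also misplace where the hypothesis on $\beta$ is used. The admissibility window of \eqref{e4} is only what Lemma~\ref{l5} needs to conclude \emph{finiteness} of $K(\{a_k\},\cdot,\cdot)$, and it holds under weaker assumptions on $\beta$ (roughly $\beta>n(p-1)-1$ and $\beta>n(q-1)-1$). What the paper actually does with $\beta>\max\{n(2p-1)-1,n(2q-1)-1\}$ is different: it abandons off-diagonal decay altogether and dominates $K(\{a_k\}_{k\ge N},\tfrac{n+1+\beta}{q},\tfrac{n+1+\beta}{p})$ by $C\sum_{j\ge N}\delta(a_j)^{(n+1+\beta)/2p}$ using pointwise bounds on the kernel; the stated condition on $\beta$ is exactly what makes the full series $\sum_j\delta(a_j)^{(n+1+\beta)/2p}$ converge (compare it, via \eqref{e1} and finite overlap, with $\int_\Omega\delta(z)^{(n+1+\beta)/2p-(n+1)}\,d\nu(z)$), so that its tails --- and hence the relevant $K$'s for $\{a_k\}_{k\ge N}$ --- can be made as small as one wishes before invoking Theorem~\ref{ta} or \ref{tb} and then Theorem~\ref{tc}. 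Whatever one thinks of the paper's own domination step (it, too, must contend with the diagonal term just described), your proposal as written stops short of a proof precisely at the point where a proof is required, and the mechanism you propose for closing it would fail.
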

\begin{proof}
One can see that there exists some $C>1$ such that
\begin{equation*}
K\Big(\{a_k\},\dfrac{n+1+\beta}{q},\dfrac{n+1+\beta}{p}\Big)\leq C \sum \delta(a_j)^{(n+1+\beta)/2p}
\end{equation*}
By \ref{e1} and Lemma \ref{l5}, we have
\begin{eqnarray*}
\sum \delta(a_j)^{(n+1+\beta)/2p}&\lesssim& \sum \int_{B(a_j,r)}  \delta(z)^{(\beta+n+1)/2p-(n+1)} d\nu(z)\\
&\lesssim& \int_{\Omega} \delta(z)^{(\beta+n+1)/2p-(n+1)} d\nu(z).
\end{eqnarray*}
Since $\beta> \max \{n(2p-1)-1, n(2q-1)-1\},$ the above integral is finite.
Thus, there is some $N_1\in \mathbb{N}$ such that
\begin{equation*}
\sum_{j=N_1}^\infty \delta(a_j)^{(n+1+\beta)/2p}\leq \dfrac{1}{2C}
\end{equation*}
Hence,
$$K\Big(\{a_k\}_{k=N_1}^\infty,\dfrac{n+1+\beta}{p},\dfrac{n+1+\beta}{q}\Big)<1.$$
Similarly, we can find an $N_2$ such that
$$K\Big(\{a_k\}_{k=N_2}^\infty,\dfrac{n+1+\beta}{q},\dfrac{n+1+\beta}{p}\Big)<1.$$
If $N=\max \{N_1,N_2\}$, Theorem \ref{tb} implies that $\{a_k\}_{k=N}^\infty$ is an $A_\beta^p(\Omega)$-interpolating sequence. Finally, Theorem \ref{tc} completes the proof.
\end{proof}

Jevtic et al. \cite[Lemma 1.5]{jevtic} proved that an interpolating sequence for  $A^p_\beta(\mathbb{B}_n)$ is separated. Above theorem says that for suitable $\beta$, separated sequences are $A^p_\beta(\Omega)$-interpolating. Thus, the folowing result is straightforward.

\begin{corollary}\label{tf}
Let $1\leq p<\infty$, $\{a_k\}$ be a sequence in $\mathbb{B}_n$, and
 \begin{itemize}
\item if $p=1$, then $\beta>n-1$, and
\item if $p>1$, then $\beta> \max \{n(2p-1)-1, n(2q-1)-1\}$,  where $q$ is the conjugate exponent of $p$.
\end{itemize}
 Then $\{a_k\}$ is an $A_\beta^p(\mathbb{B}_n)$-interpolating sequence if and only if it is separated.
\end{corollary}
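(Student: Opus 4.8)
The plan is to split the equivalence into its two implications, both of which are immediate from material already assembled. For the \emph{necessity} direction, assume $\{a_k\}$ is an $A^p_\beta(\mathbb{B}_n)$-interpolating sequence. Since $\mathbb{B}_n$ is a strongly pseudoconvex bounded domain with smooth boundary, one simply invokes the separation criterion of Jevtic et al.\ \cite[Lemma 1.5]{jevtic}, which asserts precisely that every interpolating sequence for $A^p_\beta(\mathbb{B}_n)$ is separated; no extra work is needed here.

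For the \emph{sufficiency} direction, assume $\{a_k\}$ is separated. A separated sequence is, trivially, the union of a finite number (namely one) of separated sequences, so the hypothesis of Theorem \ref{te} is met; and $\mathbb{B}_n$ satisfies the standing geometric assumptions on $\Omega$ — in particular completeness of the Kobayashi metric and the Bergman-kernel estimates used throughout Section 2. Under the stated range of $\beta$, namely $\beta>n-1$ if $p=1$ and $\beta>\max\{n(2p-1)-1,\,n(2q-1)-1\}$ if $p>1$, Theorem \ref{te} applies directly and produces, for each target datum in $\ell^p_{(n+1+\beta)/p}(\{a_k\})$, an interpolating function in $A^p_\beta(\mathbb{B}_n)$. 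Hence $\{a_k\}\in Int(A^p_\beta(\mathbb{B}_n))$, and combining the two directions gives the claimed equivalence.

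I do not expect any genuine obstacle: all of the analytic content has already been carried out — the Carleson-measure characterization through Lemma \ref{l5}, the duality/kernel estimate bounding the approximate extension operator $E$, and the perturbation estimate $\|TE-I\|<1$ underlying Theorems \ref{tb} and \ref{te}, together with the cited necessity of separation from \cite{jevtic}. The only points deserving a line of verification are that a lone separated sequence qualifies as a ``finite union of separated sequences'' and that $\mathbb{B}_n$ meets every geometric hypothesis invoked en route; both are obvious, so the corollary is indeed straightforward.
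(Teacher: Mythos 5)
Your proposal matches the paper's own argument exactly: the paper likewise obtains necessity from \cite[Lemma 1.5]{jevtic} and sufficiency by applying Theorem \ref{te} to a single separated sequence (a finite union of one separated sequence). Both directions are handled the same way, so there is nothing further to add.
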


\section{Some basic results}

The following lemma is well-known in the unit ball. Indeed, the subharmonicity of the analytic functions on the unit ball implies it. However, subharmonicity does not necessarily hold on $\Omega$. Hence, we give a short and new proof for this lemma.
\begin{lemma}\label{l3}
Let $0<p<\infty$ and $0<R<r$. Then there is a positive constant $C$ such that for all $f\in H(\Omega)$:
$$|\nabla f (z)|\leq C \Big( \int_{B(a,r)} |f(w)|^p d\nu (w) \Big)^\frac{1}{p}, \qquad z\in B(a,R).$$
\end{lemma}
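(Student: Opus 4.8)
# Proof proposal for Lemma \ref{l3}

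The plan is to reduce the gradient estimate on the Kobayashi ball $B(a,R)$ to the known $L^p$-mean-value control of plurisubharmonic functions (Lemma \ref{l2}) together with a Cauchy-type estimate for the derivatives of holomorphic functions on Euclidean balls. First I would fix $a\in\Omega$ and recall from \eqref{e1} that $\nu(B(a,r))\simeq\delta(a)^{n+1}$, and that a Kobayashi ball $B(a,R)$ sits between two Euclidean balls whose radii are comparable to $\delta(a)$: concretely, there are constants (depending only on $R,r$, not on $a$) so that for every $z\in B(a,R)$ the Euclidean ball $D(z,c\,\delta(a))$ is contained in $B(a,\rho)$ for some intermediate radius $R<\rho<r$. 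This is the standard comparison between the Kobayashi distance and $\delta$ on a strongly pseudoconvex domain; it is exactly the kind of estimate underlying \eqref{e1} and the cited lemmas of Abate--Raissy.

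Next, for $z\in B(a,R)$ and the Euclidean ball $D(z,c\,\delta(a))\subset B(a,\rho)$, I would apply the ordinary Cauchy estimates (or, equivalently, the sub-mean-value property of $|\partial f/\partial z_j|$, which is holomorphic) to get
$$
|\nabla f(z)|\ \lesssim\ \frac{1}{\delta(a)}\,\sup_{w\in D(z,c\delta(a)/2)}|f(w)|.
$$
Then I would bound that sup by an $L^p$-average: since $|f|^p$ is plurisubharmonic and nonnegative, Lemma \ref{l2} (applied with center $w$ and the appropriate pair of radii, all within $B(a,r)$) gives, for each such $w$,
$$
|f(w)|^p\ \lesssim\ \frac{1}{\nu(B(a,r))}\int_{B(a,r)}|f|^p\,d\nu\ \simeq\ \frac{1}{\delta(a)^{n+1}}\int_{B(a,r)}|f|^p\,d\nu .
$$
Combining the two displays,
$$
|\nabla f(z)|\ \lesssim\ \frac{1}{\delta(a)}\left(\frac{1}{\delta(a)^{n+1}}\int_{B(a,r)}|f|^p\,d\nu\right)^{1/p}.
$$

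At this point the statement as written in the lemma has no $\delta(a)$-weight on the right-hand side, so strictly the inequality I obtain is \emph{stronger} (it has an extra factor $\delta(a)^{-(n+1)/p-1}$), and the clean conclusion $|\nabla f(z)|\le C\big(\int_{B(a,r)}|f|^p\,d\nu\big)^{1/p}$ follows at once because $\delta$ is bounded above on $\Omega$ (so negative powers of $\delta(a)$ are bounded below, not above)—wait, that goes the wrong way. So the honest reading is that the lemma is the scale-invariant estimate $|\nabla f(z)|\lesssim \delta(a)^{-1-(n+1)/p}\big(\int_{B(a,r)}|f|^p\big)^{1/p}$, and I would state the proof to deliver precisely that; if the paper really intends the weight-free form it can only be because $\delta(a)$ is bounded, which fails near the boundary, so I will present the weighted version and note that the constant $C$ absorbs all $a$-independent factors. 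The main obstacle is the first step: making rigorous, with constants uniform in $a\in\Omega$, the two-sided comparison of the Kobayashi ball $B(a,R)$ with Euclidean balls of radius $\simeq\delta(a)$, so that the Euclidean Cauchy estimate can be inserted between two Kobayashi balls $B(a,R)\subset\text{(Euclidean)}\subset B(a,r)$. Everything after that is a routine concatenation of Cauchy's inequality, plurisubharmonicity, Lemma \ref{l2}, and \eqref{e1}.
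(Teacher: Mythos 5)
Your route is genuinely different from the paper's. The paper argues functional-analytically: it writes $\partial f/\partial z_k(z)$ as the pairing of $f$ with the limit of difference quotients of point-evaluation functionals on $A^p(B(a,r))$, and asserts that these derivative functionals $S_{z_k,z}$ are uniformly bounded for $z$ in the smaller ball $B(a,R)$, whose closure is a compact subset of $B(a,r)$. You instead insert Euclidean balls between $B(a,R)$ and $B(a,r)$, apply Cauchy's estimate for the gradient, and control the resulting sup by the $L^p$-average via plurisubharmonicity of $|f|^p$ (Lemma~\ref{l2} or the Euclidean sub-mean-value inequality). Your argument is more elementary and more quantitative: it delivers the scale-invariant bound $|\nabla f(z)|\lesssim \delta(a)^{-1-(n+1)/p}\bigl(\int_{B(a,r)}|f|^p\,d\nu\bigr)^{1/p}$ with constants uniform in the center, whereas the paper's argument hides the dependence on the geometry inside the norm of $S_{z_k,z}$ (and, for $0<p<1$, leans on a duality pairing that needs more care than the paper gives it). Both approaches are viable.

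One correction to your closing discussion: there is no discrepancy between your weighted estimate and the weight-free statement of the lemma. The paper fixes $a$ once and for all in the introduction (``Throughout the paper, $a$ is a fixed point in $\Omega$''), so $B(a,r)$ is a single fixed ball, $\delta(a)$ is a genuine constant, and the factor $\delta(a)^{-1-(n+1)/p}$ is simply absorbed into $C$. Your worry that ``$\delta(a)$ fails to be bounded below near the boundary'' only arises if $a$ is allowed to range over $\Omega$, which the lemma does not require. With that misreading removed, your proof establishes the statement as written (and in fact a stronger uniform version); the only remaining point to make rigorous is the inclusion of a Euclidean ball of radius comparable to $\delta(a)$ (or, since $a$ is fixed, simply of radius $\operatorname{dist}(\overline{B(a,R)},\partial B(a,r))/2$, which is positive by compactness) inside $B(a,r)$, and for a fixed center this is immediate and does not need the uniform Kobayashi--Euclidean comparison at all.
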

\begin{proof}
Let $L_z$ be the point evaluation at $z$  in $A^p(B(a,r))$. Then
$$\dfrac{\partial}{\partial z_k} f(z)= \lim_{\lambda\rightarrow 0} \dfrac{f(z+\lambda e_k)-f(z)}{\lambda}=\langle f, \lim_{\lambda\rightarrow 0} \dfrac{L_{z+\lambda e_k}-L_z}{\lambda}\rangle.$$
We put
\begin{eqnarray*}
\lim_{\lambda\rightarrow 0} \dfrac{K_{z+\lambda e_k}-K_z}{\lambda}= S_{z_k,z}
\end{eqnarray*}
Also, there is a positive constant $C$ such that
$$\| S_{z_k,z} \|\leq C, \qquad z\in B(a,R), \ 1\leq k\leq n.$$
Thus, for $1\leq k\leq n$,
$$|\dfrac{\partial}{\partial z_k} f(z)|=| \langle f, S_{z_k,z} \rangle | \leq C \Big( \int_{B(a,r)} |f(w)|^p d\nu (w) \Big)^\frac{1}{p}.$$
This completes the proof.
\end{proof}

Let $\varphi$ be an automorphism of $\Omega$. We use $J_\varphi(z)$ to denote the complex Jacobian of $\varphi$ at $z$. If $J_{\varphi,R}(z)$ is the real Jacobian of $\varphi$, then it is well-known that
$$J_{\varphi,R}(z)=|J_\varphi(z)|^2,$$
and
\begin{equation} \label{e5}
 K(z,w)=J_\varphi(z) \overline{J_\varphi(w)} K(\varphi(z),\varphi(w)),
\end{equation}
for all $z,w\in \Omega$. The above equations imply a useful relationship:
\begin{equation} \label{e6}
\int_\Omega f(z) d\nu(z)
=\int_\Omega f(\varphi(z)) |J_{\varphi}(z)|^2 d\nu(z)
=\int_\Omega f(\varphi(z)) \dfrac{K(z,z)}{K(\varphi(z),\varphi(z))} d\nu(z)
\end{equation}

\begin{lemma} \label{l5}
Let $u\in \Omega$ such that there is an automorphism $\varphi_u$ on $\omega$ where $\varphi_u(a)=u$ and $C_{\varphi_u}$ is bounded on $A^p_\beta(\Omega)$. Then there is a positive constant $C$ such that
$$ C^{-1} \leq \Big| \dfrac{K(z,u)}{K(z,v)} \Big| \leq C  ,$$
for all $z, v\in \Omega$ with $\beta(u,v)\leq R$.
\end{lemma}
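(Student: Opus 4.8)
The plan is to use the automorphism $\varphi_u$ to transport the problem to a comparison of Bergman kernels at the fixed base point $a$, and then to settle that comparison with the plurisubharmonicity of the kernel and the sub-mean-value inequality of Lemma \ref{l2}.

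First I would put $\psi=\varphi_u^{-1}$, so that $\psi(u)=a$. Since $C_{\varphi_u}$ is bounded on $A^p_\beta(\Omega)$, \cite[Corollary 3.3]{keshavarzi} gives $\delta(\varphi_u(z))\simeq\delta(z)$; replacing $z$ by $\psi(z)$ then shows that $C_\psi$ is also bounded and that $\delta(\psi(z))\simeq\delta(z)$ for all $z\in\Omega$, while $\psi$ is an isometry of the Kobayashi distance. Applying \ref{e5} to $K(z,u)$ and to $K(z,v)$ and dividing, the factor $J_\psi(z)$ cancels and
$$\frac{K(z,u)}{K(z,v)}=\frac{\overline{J_\psi(u)}}{\overline{J_\psi(v)}}\cdot\frac{K(\psi(z),a)}{K(\psi(z),\psi(v))},$$
an identity to be read as a comparison of moduli, so that possible zeros of the kernel cause no trouble. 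For the Jacobian factor, putting $z=w$ in \ref{e5} gives $|J_\psi(z)|^2=K(z,z)/K(\psi(z),\psi(z))$, hence by \ref{e3} and the above $|J_\psi(z)|^2\simeq(\delta(\psi(z))/\delta(z))^{n+1}\simeq 1$, so that
$$\Big|\frac{J_\psi(u)}{J_\psi(v)}\Big|^2\simeq\frac{\delta(a)^{n+1}\,\delta(v)^{n+1}}{\delta(u)^{n+1}\,\delta(\psi(v))^{n+1}}.$$
Choosing $r\in(\tanh R,1)$ we have $v\in B(u,r)$, so $\delta(u)\simeq\delta(v)$ by \ref{e1}; and since $\psi$ is an isometry, $\beta(a,\psi(v))=\beta(u,v)\le R$, whence also $\delta(a)\simeq\delta(\psi(v))$. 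Thus $|J_\psi(u)/J_\psi(v)|\simeq 1$, with a constant depending only on $R$, $n$ and the boundedness constant of $C_{\varphi_u}$.

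There remains the kernel factor. Writing $w=\psi(z)$, which ranges over all of $\Omega$, and $v'=\psi(v)$, which by the isometry lies in the \emph{fixed compact} Kobayashi ball $E:=\overline{B(a,\tanh R)}$, the task becomes to show that $|K(w,a)|\simeq|K(w,v')|$ uniformly for $w\in\Omega$ and $v'\in E$. For fixed $w$ the map $\zeta\mapsto K(w,\zeta)$ is anti-holomorphic, so $\zeta\mapsto|K(w,\zeta)|$ is plurisubharmonic. Fixing $r_0\in(\tanh R,1)$, setting $R_0=\frac{1+r_0}{2}$, and picking $R_1\in(R_0,1)$ large enough that $B(v',R_0)\subset B(a,R_1)$ for every $v'\in E$ (triangle inequality and compactness of $E$), Lemma \ref{l2} applied with centre $a$ and then with centre $v'$, together with \ref{e1} to compare the volumes $\nu(B(v',r_0))$ and $\nu(B(a,r_0))$, shows that both $|K(w,v')|$ and $|K(w,a)|$ are at most a fixed multiple of
$$\frac{1}{\nu(B(a,r_0))}\int_{B(a,R_1)}|K(w,\eta)|\,d\nu(\eta).$$

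To finish, one must bound this average \emph{from below} by $|K(w,a)|$ (equivalently by $|K(w,v')|$), uniformly in $w\in\Omega$: a reverse sub-mean-value estimate for $|K(w,\cdot)|$ on the fixed Kobayashi ball $B(a,R_1)$. I expect this to be the main obstacle, since Lemma \ref{l2} supplies only the upper bound. For $\Omega=\mathbb{B}_n$ it is immediate, because there $K(\cdot,\cdot)$ has an explicit formula and never vanishes; for a general strongly pseudoconvex domain one would split on $\delta(w)$, using the size estimates \ref{e2}, \ref{e3} and the asymptotics of the Bergman kernel near $\partial\Omega$ when $w$ is close to the boundary, and a continuity–compactness argument when $w$ lies in a compact subset of $\Omega$, and then chain finitely many small Kobayashi balls covering $B(a,R_1)$ (Lemma \ref{l1}) to pass from the local to the global comparison. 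Combining this with the Jacobian estimate yields $C^{-1}\le|K(z,u)/K(z,v)|\le C$, which is the assertion.
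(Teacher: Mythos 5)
Your reduction is exactly the paper's: apply \ref{e5} with $\psi=\varphi_u^{-1}$, cancel the factor $J_\psi(z)$, and split the ratio into a Jacobian factor and a kernel factor. Your treatment of the Jacobian factor (via $|J_\psi(z)|^2=K(z,z)/K(\psi(z),\psi(z))$, then \ref{e3}, \ref{e1} and the fact that $\psi$ is a Kobayashi isometry) is complete and in fact more explicit than the paper's, which rewrites $|J_{\varphi_u^{-1}}(u)|/|J_{\varphi_u^{-1}}(v)|$ in terms of kernel norms and invokes the compactness of $B(a,r)$. The genuine gap is the one you name yourself: the two-sided estimate $|K(w,a)|\simeq|K(w,\psi(v))|$ uniformly in $w\in\Omega$ and $\psi(v)$ in a fixed compact set. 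Plurisubharmonicity and Lemma \ref{l2} give only upper bounds by a common average; they can never give the reverse inequality, because a sub-mean-value inequality cannot bound a plurisubharmonic function from below near one of its zeros. For a general strongly pseudoconvex $\Omega$ the function $\zeta\mapsto K(w,\zeta)$ may indeed vanish off the diagonal (the Lu Qi-Keng phenomenon), so the quantity $|K(z,u)/K(z,v)|$ can a priori be $0$ or $\infty$ at individual points, and the chaining-of-small-balls strategy you sketch cannot repair this, since it still requires a local lower bound somewhere. So the proposal does not close, and the missing step is not a routine one.

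You should know, however, that the paper's own proof stops at essentially the same point: it bounds the numerator $|K(\varphi_u^{-1}(z),a)|$ above by $\|K(\cdot,a)\|_\infty$ but never bounds the denominator $|K(\varphi_u^{-1}(z),\varphi_u^{-1}(v))|$ away from zero, and it never addresses the lower inequality $C^{-1}\le|K(z,u)/K(z,v)|$ at all. In other words, you have correctly isolated the real difficulty, carried out cleanly the parts of the argument that the paper's toolkit supports, and the obstruction you flag applies equally to the published proof. To finish one needs a genuinely new input, namely an off-diagonal lower bound for $|K(w,\zeta)|$ when $\zeta$ stays in a fixed compact subset of $\Omega$; this is immediate on $\mathbb{B}_n$ from the explicit formula, as you note, but on a general strongly pseudoconvex domain it requires proof and possibly additional hypotheses.
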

\begin{proof}
By \ref{e5}, we have
\begin{eqnarray*}
I(z)&=&\Big| \dfrac{K(z,u)}{K(z,v)}\Big| = \Big| \dfrac{J_{\varphi_{u}^{-1}}(z)\overline{J_{\varphi_{u}^{-1}}(u)}K(\varphi_{u}^{-1}(z),\varphi_{u}^{-1}(u))}{J_{\varphi_{u}^{-1}}(z)\overline{J_{\varphi_{u}^{-1}}(v)}K(\varphi_{u}^{-1}(z),\varphi_{u}^{-1}(v))}\Big| \\
&=& \dfrac{|J_{\varphi_{u}^{-1}}(u)|}{|J_{\varphi_{u}^{-1}}(v)|} \times \dfrac{|K(\varphi_{u}^{-1}(z),a)|}{|K(\varphi_{u}^{-1}(z),\varphi_{u}^{-1}(v))|}.
\end{eqnarray*}
For each $z\in \Omega$, we have
$$|K(\varphi_{u}^{-1}(z),a)|\leq \|K(. ,a)\|_{\infty}  <\infty .$$
Thus, again by \ref{e5}
$$I(z)\lesssim \dfrac{|J_{\varphi_{u}^{-1}}(u)|}{|J_{\varphi_{u}^{-1}}(v)|}  =\dfrac{\|K_u\| \|K_{\varphi_{u}^{-1}(v)}\| }{\|K_v\| \|K_{a}\| }\lesssim \|K_{\varphi_{u}^{-1}(v)}\|.$$
Finaly, since $\varphi^{-1}_u(v)\in B(a,r)$, we conclude the desired result.
\end{proof}

\begin{lemma}
Let $0<r<1$ and $\{a_k\}$ be an $r$-lattice for $\Omega$. Then, for each $k\geq 1$, there exists a Borel set $D_k$ satisfying the following conditions:
\begin{itemize}
\item[(i)] $D(a_k,r/3) \subset D_k \subset D(a_k,r)$, for every $k$.
\item[(ii)] $D_k\cap D_j=\emptyset$, for $k\neq j$.
\item[(iii)] $\Omega=\cup D_k$.
\end{itemize}
\end{lemma}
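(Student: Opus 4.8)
The plan is to obtain the sets $D_k$ by a greedy assignment that keeps the already-disjoint cores $B(a_k,r/3)$ intact and then parcels out the remaining part of $\Omega$ without ever leaving the enclosing ball $B(a_k,r)$. Recall that, as noted after Lemma~\ref{l1}, the balls $B(a_k,r/3)$ are pairwise disjoint, and that the lattice property gives $\Omega=\bigcup_{k\ge1}B(a_k,r)$.

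First I would introduce the leftover set $E:=\Omega\setminus\bigcup_{j\ge1}B(a_j,r/3)$. For every $z\in E$ the index set $\{k\in\mathbb{N}:z\in B(a_k,r)\}$ is nonempty, since the $r$-balls cover $\Omega$, hence it has a least element $\kappa(z)$. Then I set
$$D_k:=B(a_k,r/3)\;\cup\;\{z\in E:\kappa(z)=k\}.$$
Conceptually, $D_k$ is the core ball together with those still-unassigned points of $B(a_k,r)$ that were not already claimed by a ball of smaller index.

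Next I would verify the stated properties. For (i), the inclusion $B(a_k,r/3)\subset D_k$ is built in, while $D_k\subset B(a_k,r)$ follows since $r/3<r$ makes the Kobayashi balls nested ($\tanh$ is increasing) and since $\kappa(z)=k$ forces $z\in B(a_k,r)$. For (ii), if $z\in D_k\cap D_j$ with $k\ne j$, I run through the four cases: two distinct cores cannot meet; a core ball is disjoint from $E$ by construction, so the two mixed cases are impossible; and $\kappa$ is single-valued, which rules out the last case. For (iii), any $z\in\Omega$ either lies in some core ball $B(a_k,r/3)\subseteq D_k$, or else lies in $E$, and then $z\in B(a_{\kappa(z)},r)$ gives $z\in D_{\kappa(z)}$. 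Finally, for Borel measurability, the Kobayashi balls are open because $\beta(\cdot,\cdot)$ is continuous; hence $E$ is closed, and
$$\{z\in E:\kappa(z)=k\}=E\cap B(a_k,r)\cap\bigcap_{j<k}\bigl(\Omega\setminus B(a_j,r)\bigr)$$
is Borel as a finite intersection of Borel sets, so $D_k$ is Borel.

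The whole argument is elementary bookkeeping; the only points that need a moment's care are that $\kappa$ is well defined — which is exactly where the covering half of the $r$-lattice definition is used — and that $E$ is genuinely disjoint from every core ball, so that the two pieces making up each $D_k$ do not overlap with cores of other indices. No analytic input beyond the openness of Kobayashi balls is required.
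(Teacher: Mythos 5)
Your construction is correct and is essentially the same greedy disjointification the paper uses: the paper forms $E_k=B(a_k,r)\setminus\bigcup_{j\neq k}B(a_j,r/3)$ and then sets $D_1=E_1$, $D_{k+1}=E_{k+1}\setminus\bigcup_{i\le k}D_i$, which produces the same sets as your minimal-index assignment of the leftover region $E=\Omega\setminus\bigcup_j B(a_j,r/3)$. Your explicit check that the $D_k$ are Borel (via openness of the Kobayashi balls) is a worthwhile detail that the paper's proof leaves implicit.
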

\begin{proof}
For every $k\geq 1$, let
$$E_k=D(a_k,r)-\cup_{j\neq k} D(a_j,r/3).$$
Then $D(a_k,r/3) \subset E_k \subset D(a_k,r)$ (since $D(a_k,r/3)$ are disjoint). Also, $\{E_k\}$ covers $\Omega$: if $z\in D(a_j,r/3)$, for some $j\neq k$, then $z\in E_j$; otherwise $z\in E_k$.

 Let $D_1=E_1$ and  $D_{k+1}=E_{k+1} \setminus \cup_{i=1}^k D_i$. $\{D_k\}$ is a disjoint cover of $\Omega$: In fact, if $z\in \Omega$, then $z\in E_k$ for some $k$. If $k=1$, then $z\in D_1$. If $k>1$, then we either have $z\in D_i$ for some $1\leq i\leq k$, or $z \notin \cup_{i=1}^{k-1} D_i$. Thus $z\in E_k \setminus \cup_{i=1}^{k-1} D_i=D_k$.

 For each $k\geq 1$, we have $D_k\subset E_k \subset D(a_k,r)$. To see that each $D_{k+1}$ contains $D(a_{k+1},r/3)$ ($D_1=E_1\supset D(a_1,r/3)$), we fix $k\geq 1$, and fix $z\in D(a_{k+1},r/3)\subset E_{k+1}$. Then $z\notin E_i$, for any $i\neq k+1$ which implies that $z\notin D_i$ for any  $i\neq k+1$. This shows that
$$z\in E_{k+1} - \cup _{i=1}^k = D_{k+1}.$$
\end{proof}

\vspace*{1cm}
 Hamzeh Keshavarzi

E-mail: Hamzehkeshavarzi67@gmail.com

Department of Mathematics, College of Sciences,
Shiraz University, Shiraz, Iran

\end{document}